\newtheorem{thm}{Theorem}
\newtheorem{lem}{Lemma}
\newtheorem{proposition}{Proposition}
\theoremstyle{remark}
\begin{document}

\markboth{Ritabrata Munshi}{Twists of $GL(3)$ $L$-functions}
\title[Twists of $GL(3)$ $L$-functions]{Twists of $GL(3)$ $L$-functions}

\author{Ritabrata Munshi}   

\address{School of Mathematics, Tata Institute of Fundamental Research, 1 Dr. Homi Bhabha Road, Colaba, Mumbai 400005, India.}  
\curraddr{Statistics and Mathematics Unit, Indian Statistical Institute, 203 B.T. Road, Kolkata 700108, India}   
\email{rmunshi@math.tifr.res.in}
\thanks{The author is supported by SwarnaJayanti Fellowship, 2011-12, DST, Government of India.}

\begin{abstract}
Let $\pi$ be a $SL(3,\mathbb Z)$ Hecke-Maass cusp form, and let $\chi$ be a primitive Dirichlet character modulo $M$, which we assume to be prime. In this note we revisit the subconvexity problem addressed in `The circle method and bounds for $L$-functions IV' and establish the following unconditional bound
\begin{align*}
L\left(\tfrac{1}{2},\pi\otimes\chi\right)\ll M^{3/4-1/308+\varepsilon}.
\end{align*}
\end{abstract}

\subjclass[2010]{11F66, 11M41}
\keywords{subconvexity, $GL(3)$ Maass forms, twists}

\maketitle


\section{Introduction}
\label{introd}

In this note we return to the subconvexity problem addressed in \cite{Mu5}. Our aim here is to present an argument, a variation of the $GL(2)$ delta method technique introduced in \cite{Mu5}, which is more transparent and technically much simpler. As an advantage we are now able to write down an explicit subconvex exponent. But most importantly the present argument, unlike \cite{Mu5}, does not rely on the Ramanujan conjecture. We will prove the following unconditional subconvexity result. (Note that the exponent $3/4+\varepsilon$ is the convexity bound.)\\

\begin{thm}
\label{mthm}
Suppose $\pi$ is a $SL(3,\mathbb{Z})$ Hecke-Maass cusp form, and $\chi$ is a primitive Dirichlet character modulo $M$ (which we assume to be prime). Then we have
\begin{align}
\label{main-bound}
L\left(\tfrac{1}{2},\pi\otimes\chi\right)\ll M^{3/4-1/308+\varepsilon}.
\end{align}
\end{thm}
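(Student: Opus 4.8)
The plan is to follow the broad strategy of \cite{Mu5} — decouple the oscillations of $\lambda_\pi(n)$ and $\chi(n)$ by a delta/circle method, then apply dual summation and Cauchy--Schwarz — but to carry out the separation step more cleanly and to track every exponent explicitly. Since $L(s,\pi\otimes\chi)$ has arithmetic conductor $\asymp M^3$, its approximate functional equation together with a dyadic subdivision reduces \eqref{main-bound} to a bound of the shape
\[
S(N):=\sum_{n\geq 1}\lambda_\pi(n)\,\chi(n)\,V\!\left(\frac{n}{N}\right)\ \ll\ M^{3/2-\delta+\varepsilon},
\]
for a fixed $\delta>0$ — one wants $\delta=1/308$ — uniformly in the range $M^{3/2-\varepsilon}\leq N\leq M^{3/2+\varepsilon}$, with $V$ a fixed smooth bump. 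The trivial bound here is $S(N)\ll N^{1+\varepsilon}\asymp M^{3/2+\varepsilon}$, i.e. convexity, so only a small power saving is required.

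\emph{Separation of oscillation.} I would introduce a fresh variable $m\asymp N$ with a bump $W$ that is $\equiv 1$ on the support of $V$, use that $\chi(n)=\chi(m)$ whenever $n\equiv m\pmod M$ (here the primality of $M$ keeps all subsequent mod-$M$ character sums clean), and detect the surviving condition ``$n=m$'' by a DFI-type delta expansion applied to the integer $(n-m)/M$. This is the conductor-lowering device underlying the ``$GL(2)$ delta method'' of \cite{Mu5}: it forces the circle-method modulus $q$ to run only up to $Q\asymp(N/M)^{1/2}\asymp M^{1/4}$ rather than up to $N^{1/2}$. After this step $S(N)$ becomes, up to a negligible error, a short average over $q\le Q$, over residues modulo $q$ and modulo $M$, and over a bounded archimedean integral, of a product of an $n$-sum carrying an additive character of modulus $qM$ against an $m$-sum carrying the conjugate character twisted by $\chi$.

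\emph{Dual sums, Cauchy--Schwarz, and the count.} To the $n$-sum I would apply the $GL(3)$ Voronoi summation formula, producing a dual sum of length $\ll(qM)^3/N\asymp q^3M^{3/2}$ in which the Hecke data appears through the coefficients $A_\pi(\cdot,\cdot)$ and the additive characters collapse to $GL(3)$ Kloosterman-type sums; to the $m$-sum I would apply Poisson summation modulo $qM$, whose complete character sum factors — by the coprime split $qM=q\cdot M$, valid since $q<M$ — into a Ramanujan-type sum modulo $q$ times a Gauss sum $\tau(\chi)$ of modulus $M$. At this point $\chi$ has been entirely absorbed into the single factor $\tau(\chi)$ of size $M^{1/2}$, and it remains to bound a triple sum over $q$, over the $GL(3)$-dual variable, and over the (short) Poisson-dual variable. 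I would then Cauchy--Schwarz in the $GL(3)$-dual variable, keeping the coefficients $A_\pi$ inside the square; expanding the square brings in two copies $q_1,q_2$ of the modulus, and a further Poisson summation reduces the inner sum to counting solutions of a congruence roughly of the form $q_2\mu_1\equiv q_1\mu_2$ in short ranges. The diagonal is controlled using only the Rankin--Selberg bound $\sum_{n\leq X}|\lambda_\pi(n)|^2\ll X^{1+\varepsilon}$ together with its companion for the $GL(3)$ Fourier coefficients $A_\pi(n_1,n_2)$ — this is precisely where the present argument, unlike \cite{Mu5}, dispenses with the Ramanujan conjecture — while the off-diagonal is handled by a Weil-type estimate for the resulting exponential sum. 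Carrying out the bookkeeping then yields the admissible exponent $3/4-1/308+\varepsilon$.

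\emph{Main obstacle.} I expect the crux to be the off-diagonal analysis after Cauchy--Schwarz: one must establish genuine square-root cancellation in the residual exponential sum and a sufficiently sharp bound for the number of near-diagonal solutions, all while dragging along the long $GL(3)$-dual summation unconditionally (Rankin--Selberg in place of Ramanujan). Extracting a \emph{numerical} saving, rather than merely ``some $\delta>0$'', forces one to be economical at every application of Cauchy--Schwarz and at every trivial estimate, and it is this tightness that is responsible for the somewhat unlovely exponent $1/308$.
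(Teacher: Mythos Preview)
Your proposal conflates two distinct separation devices. What you describe --- detecting $n=m$ via a DFI-type delta expansion applied to $(n-m)/M$, with moduli $q\le Q\asymp M^{1/4}$, followed by $GL(3)$ Voronoi on $n$ and Poisson on $m$ --- is the classical circle method with the congruence-equation trick, i.e.\ essentially the approach of \cite{Mu4}. The present paper does \emph{not} do this. It uses the $GL(2)$ delta method: $\delta(n=r)$ is expanded spectrally via the Petersson formula over an orthogonal Hecke basis of $S_k(pM,\psi)$, averaged over primes $p\sim P$ and odd characters $\psi\bmod p$. This is not cosmetic. The Petersson route splits $S^\star(N)$ into a sum $\mathcal{F}$ over genuine $GL(2)$ cusp forms $f$ --- to which one applies the $GL(3)\times GL(2)$ Rankin--Selberg functional equation and then $GL(2)$ Voronoi on the $r$-sum, before a \emph{second} application of Petersson --- and a Kloosterman off-diagonal $\mathcal{O}$; the two pieces are bounded separately (Propositions~\ref{prop1} and~\ref{prop2}) and then balanced.

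Two further structural ingredients are absent from your sketch and are essential for the exponent. First, the paper inserts an auxiliary average over primes $\ell\in\mathcal{L}$, $\ell\sim L$, by replacing $\lambda_f(n)$ with $\lambda_f(n\ell)$ and stretching the $r$-sum to length $N\ell$; after reciprocity this $\ell$ migrates into the \emph{modulus}, and its sole purpose is to split that modulus so that the final Cauchy--Poisson step has a diagonal and off-diagonal that can be equalised. Second, one now carries three free parameters $P$, $L$, $\theta$ (the last governing the truncation in the approximate functional equation), and the value $1/308$ comes from the specific optimisation $\theta=1/154$, $P=M^{5/18-25\theta/9}$, $L=PM^{-1/6+11\theta/3}$ forced by the two propositions. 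Your outline has no analogue of $P$ or $L$ and only a single scale $Q$; even granting that it produces \emph{some} $\delta>0$ (as \cite{Mu4} indeed does), there is no mechanism by which it lands on $1/308$, and the ``bookkeeping'' you invoke would, if actually carried out, yield a strictly weaker numerical saving.
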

  
\bigskip
  
In general we will stick to the notations used in \cite{Mu5}. The reader may refer to that paper for a broader introduction to the problem and for basic definitions (also see \cite{G}). Here we start by recalling the $GL(2)$ delta method. Let $p$ be a prime number and let $k\equiv 3\bmod{4}$ be a positive integer (which will be of the size $1/\varepsilon$). Let $\psi$ be a character of $\mathbb{F}_p^\times$ satisfying $\psi(-1)=-1=(-1)^k$. We consider $\psi$ as a character modulo $pM$. 
The main novelty in this note is the use of the space $S_k(pM,\psi)$, in place of $S_k(p,\psi)$, for the $GL(2)$ $\delta$-method. The inclusion of $M$, which is the conductor of $\chi$, in the level is an analogue of the `congruence-equation trick' which was used in \cite{Mu4} in the context of the usual delta method. (This trick has turned out to be useful in other problems as well.) 
Let $H^\star(pM,\psi)$ be the set of newforms and we extend it to $H_k(pM,\psi)$ - an orthogonal Hecke basis of the space of cusp forms $S_k(pM,\psi)$. 
Let $P$ be a parameter and let 
$$
P^\star=\sum_{\substack{P<p<2P\\p\;\text{prime}}}\;\sum_{\psi\bmod{p}}\left(1-\psi(-1)\right).
$$
Using the Petersson formula we derive
\begin{align}
\label{circ-meth}
&\delta(n,r)=\frac{1}{P^\star}\:\sum_{\substack{P<p<2P\\p\;\text{prime}}}\;\sum_{\psi\bmod{p}}\left(1-\psi(-1)\right)\sum_{f\in H_k(pM,\psi)}\omega_f^{-1}\lambda_f(n)\overline{\lambda_f(r)}\\
\nonumber &-\frac{2\pi i}{P^\star}\sum_{\substack{P<p<2P\\p\;\text{prime}}}\;\sum_{c=1}^\infty \frac{1}{cpM}\sum_{\psi\bmod{p}}\left(1-\psi(-1)\right)S_\psi(r,n;cpM)J_{k-1}\left(\frac{4\pi\sqrt{nr}}{cpM}\right),
\end{align}
i.e. the right hand side is $1$ if $n=r$, and is equal to $0$ otherwise.\\

Let $\mathcal{L}$ be a set of primes in the range $[L,2L]$, with $|\mathcal{L}|=L^\star\gg L^{1-\varepsilon}$ and $L\ll M^{1-\varepsilon}$. Consider the sum
\begin{align}
\label{F}
\mathcal{F}=&\frac{1}{L^\star P^\star}\:\sum_{\substack{P<p<2P\\p\;\text{prime}}}\;\sum_{\psi\bmod{p}}\left(1-\psi(-1)\right)\sum_{f\in H_k(pM,\psi)}\omega_f^{-1}\\
\nonumber\times &\sum_{\ell\in\mathcal{L}}\bar{\chi}(\ell)\mathop{\sum\sum}_{m,n=1}^\infty \lambda(m,n)\lambda_f(n\ell)W\left(\frac{nm^2}{N}\right)\sum_{r=1}^\infty \overline{\lambda_f(r)}\chi(r)V\left(\frac{r}{N\ell}\right).
\end{align}
Using the Petersson formula we see that the diagonal term is exactly the sum of interest 
and 
$$
S^\star(N)=\mathop{\sum\sum}_{m,n=1}^\infty \lambda(m,n)\chi(n)W\left(\frac{nm^2}{N}\right)V\left(\frac{n}{N}\right).
$$ 
Here $W$ is a smooth bump function with support $[1,2]$ and $V$ is a smooth function supported in $[M^{-4\theta},4]$, with $V(x)=1$ for $x\in [2M^{-4\theta},2]$, and satisfying $y^jV^{(j)}(y)\ll_j 1$. In Lemma~6 of Section 2 of \cite{Mu5} we showed that
\begin{align}
\label{l-s-bd}
L(\tfrac{1}{2},\pi\otimes\chi)\ll M^\varepsilon\sup_N \:\frac{|S^\star(N)|}{\sqrt{N}}+ M^{3/4-\theta/2+\varepsilon},
\end{align}
where the supremum is taken over $N$ in the range $M^{3/2-\theta}<N<M^{3/2+\theta}$. \\

In fact from \eqref{circ-meth} it follows that
$
S^\star(N)=\mathcal{F}-2\pi i\:\mathcal{O}
$ 
where the off-diagonal is given by
\begin{align}
\label{off-diag-1}
\mathcal{O}=&\frac{1}{L^\star P^\star}\sum_{\substack{P<p<2P\\p\;\text{prime}}}\;\sum_{\psi\bmod{p}}\left(1-\psi(-1)\right) \mathop{\sum\sum}_{m,n=1}^\infty \lambda(m,n)W\left(\frac{nm^2}{N}\right)\\
\nonumber &\times \sum_{\ell\in\mathcal{L}}\sum_{r=1}^\infty \chi(r\bar{\ell})V\left(\frac{r}{N\ell}\right)\sum_{c=1}^\infty \frac{S_\psi(r,n\ell;cpM)}{cpM}J_{k-1}\left(\frac{4\pi\sqrt{n\ell r}}{cpM}\right).
\end{align}\\

The set of primes $\mathcal{L}$ plays a subtle role. This has been introduced to split the modulus, at one stage, as a product of two numbers. This is not completely apparent in the beginning as it takes up the role of a modulus only after an application of reciprocity. The splitting of the modulus is used at the last application of Cauchy inequality followed by the Poisson summation. One will notice that putting the whole modulus inside makes the modulus for Poisson too large for any saving, whereas putting the whole modulus outside makes the diagonal too small. The situation is similar to one that we faced in \cite{Mu}, where we employed Jutila's version of the circle method to split the modulus. \\

The main result will follow from the following two propositions.\\

\begin{proposition} 
\label{prop1}
Let $\mathcal{O}$ be as defined in \eqref{off-diag-1}. Suppose $L<P$ and $\theta<1/2$. We have
\begin{align*}
\mathcal{O}\ll N^{1/2}M^\varepsilon\left\{\frac{M^{3/4+3\theta/2}L}{P}+\frac{M^{1+\theta}}{P}\right\}.
\end{align*} 
\end{proposition}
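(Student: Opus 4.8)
The plan is to take the off-diagonal sum $\mathcal{O}$ from \eqref{off-diag-1} and process it by the standard sequence of steps in the $GL(2)$ delta method: first open the Kloosterman sum and the character sum via reciprocity, then apply Poisson summation in the variable $r$ to exploit the character $\chi$ modulo $M$ against the (now additive) oscillation from the Kloosterman term, then apply Cauchy–Schwarz in the $(m,n)$-variables and Poisson summation once more in the resulting dual variable, using the splitting of the modulus $pM$ provided by the auxiliary primes $\mathcal{L}$. First I would bound the $\psi$-sum over characters modulo $p$: the sum $\sum_{\psi\bmod p}(1-\psi(-1))S_\psi(r,n\ell;cpM)$ collapses, by orthogonality of characters, to an expression of Kloosterman/Ramanujan type whose modulus factors as $c p M$, and one uses multiplicativity to separate the $pM$-part from the $c$-part. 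The condition $L<P$ guarantees that the $\ell$-modulus is genuinely smaller than $p$, so that after reciprocity the roles of moduli are as claimed in the discussion preceding the statement.

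Next I would open $\chi(r)$ using its Gauss sum (or directly treat the arithmetic progression) and apply the Poisson summation formula to the $r$-sum, whose length is $N\ell$ and whose modulus after combining $\chi$ with the Kloosterman phase is of size $cpM$ (times the $\ell$ from the splitting). The zero frequency contributes a main term that one checks is negligible or is absorbed, while the nonzero frequencies are constrained to a short range by the resulting integral transform — a Bessel-type oscillatory integral coming from $J_{k-1}$. At this point the free variable $c$ is small: the argument of $J_{k-1}$ forces $c\ll \sqrt{N\ell r}/(pM)$ up to $M^\varepsilon$, and since $r\ll N\ell$ and $N\ll M^{3/2+\theta}$ this pins $c$ down to a very short sum, which is where the savings against the trivial $cpM$ in the denominator come from. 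I would keep careful track of the two terms in the final bound: the term $M^{3/4+3\theta/2}L/P$ arises from the generic (nonzero-frequency) contribution after Cauchy–Schwarz and the second Poisson, while $M^{1+\theta}/P$ is the contribution of the degenerate/diagonal frequency in that last Poisson step (the analogue of the "new diagonal" that always appears in this method).

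The main obstacle I anticipate is the bookkeeping in the final Cauchy–Schwarz plus Poisson step. One must smooth out the $(m,n)$-sum (the outer variable, of length $N$ in the $nm^2$-combination), apply Cauchy–Schwarz to remove the $GL(3)$ Fourier coefficients $\lambda(m,n)$ at the cost of the Ramanujan-on-average bound $\sum\sum |\lambda(m,n)|^2 W(nm^2/N)\ll N^{1+\varepsilon}$ — this is the one place where \emph{only} the averaged (Rankin–Selberg) bound is needed, which is precisely why the present argument is unconditional, in contrast to \cite{Mu5}. Then Poisson summation in $n$ against the remaining moduli (the product of the $p$-part, the $\ell$-part, and $M$) produces a character sum modulo that product; estimating this character sum — which should evaluate essentially to a complete exponential sum of Kloosterman type, hence square-root cancellation by Weil — together with the length of the dual $n$-sum gives the stated bound. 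The delicate point is ensuring the modulus for this last Poisson is exactly $\asymp$ (something like) $pM\ell$ rather than $pM$ or $pM\ell^2$: putting all of $\mathcal{L}$ inside makes it too large to save, all outside makes the diagonal dominate, and the balanced choice is what forces the constraint $\theta<1/2$ and $L\ll M^{1-\varepsilon}$. Once the two contributions are isolated and the ranges of $c$ and of the dual variables are correctly computed, the bound $\mathcal{O}\ll N^{1/2}M^\varepsilon\{M^{3/4+3\theta/2}L/P+M^{1+\theta}/P\}$ follows by collecting terms.
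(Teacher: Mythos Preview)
Your overall architecture --- execute the $\psi$-sum, apply reciprocity, Poisson in $r$, Cauchy--Schwarz to strip $\lambda(m,n)$, then Poisson in $n$ --- matches the paper's Section~3. But several of the key mechanisms are described incorrectly, and one of them is a genuine gap that would prevent you from reaching the stated bound.

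First, in the Poisson summation over $r$ (modulus $cM$, not $cpM$: after the $\psi$-sum the Kloosterman modulus drops to $cM$), it is the \emph{nonzero} frequencies that are negligible and only the \emph{zero} frequency survives; you have this reversed. The zero-frequency character sum is then evaluated to $c\,g_\chi\,\chi(pc)\,\mathfrak{D}(\overline{pc}n\ell;M)$, with $\mathfrak{D}$ as in \eqref{d-char-sum}. Second, after Cauchy the variable $n$ enters only through $\mathfrak{D}(\overline{pc}n\ell;M)$, so the modulus for the second Poisson is $M$ alone --- the primes $p$ and $\ell$ are parameters, not part of the modulus. Again only the zero frequency survives; there is no dual $n$-sum to control.

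The serious issue is your appeal to ``square-root cancellation by Weil'' for the resulting character sum. The paper computes the correlation sum
\[
\mathfrak{C}_3=\sum_{a\bmod M}\mathfrak{D}(\overline{pc}a\ell;M)\overline{\mathfrak{D}(\overline{p'c'}a\ell';M)}
\]
\emph{explicitly}: it equals $M(M-2)$ on the diagonal $pc\ell'\equiv p'c'\ell\bmod M$ and is $O(M)$ otherwise. This is full-modulus cancellation, not $\sqrt{M}$; the paper's outline flags this as ``crucial''. A generic Weil bound would only give $\mathfrak{C}_3=O(M^{3/2})$ off-diagonally, which inflates the off-diagonal part of $\mathfrak{U}$ by $M^{1/2}$ and costs you $M^{1/4}$ in the final estimate for $\mathcal{O}$ --- enough to lose subconvexity. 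So the step ``Kloosterman type, hence square-root by Weil'' does not suffice; you need to recognize and exploit the special algebraic structure of $\mathfrak{D}$. Finally, the two terms in the proposition are, respectively, the off-diagonal ($v\not\equiv 1$) and diagonal ($v\equiv 1$) contributions of $\mathfrak{C}_3$ at the zero Poisson frequency, not a zero/nonzero-frequency dichotomy.
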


\bigskip

\begin{proposition}
\label{prop2}
If $M^{2\theta+\varepsilon}\ll L<M^2/N$, we have
\begin{align*}
\mathcal{F}\ll M^\varepsilon N^{1/2}\left[P^{1/2}M^{1/2+9\theta}+ \frac{P^{1/4}M^{5/8+17\theta/4}}{L^{1/2}}\left(L^{3/4}+\frac{M^{11\theta/4}P^{3/4}}{M^{1/8}}\right)\right].
\end{align*}
\end{proposition}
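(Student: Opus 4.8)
The plan is to estimate $\mathcal{F}$ by opening the Petersson formula backwards—that is, treating $\mathcal{F}$ as given by \eqref{F} and applying the spectral sum over $f\in H_k(pM,\psi)$ via the Petersson formula. Writing
\[
\sum_{f\in H_k(pM,\psi)}\omega_f^{-1}\lambda_f(n\ell)\overline{\lambda_f(r)}
=\delta(n\ell,r)+\text{(Kloosterman term)},
\]
the diagonal contribution $n\ell=r$ simply reconstructs $S^\star(N)$ (up to normalization) and so must be excluded from the bound we are proving; the content of the proposition is that the Kloosterman piece, summed against the $GL(3)$ coefficients $\lambda(m,n)$ and the character $\chi$, is small. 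So the first step is to arrive at an expression of the shape
\[
\mathcal{F}-(\text{main term})\ \sim\ \frac{1}{L^\star P^\star}\sum_{P<p<2P}\sum_{\psi\bmod p}(1-\psi(-1))\sum_{\ell\in\mathcal{L}}\bar\chi(\ell)\sum_{c}\frac{1}{cpM}\sum_{m,n}\lambda(m,n)W(\cdots)\sum_{r}\chi(r)V(\cdots)S_\psi(n\ell,r;cpM)J_{k-1}\!\left(\tfrac{4\pi\sqrt{n\ell r}}{cpM}\right),
\]
and then to bound this. Before doing so I would record the ranges forced by the Bessel function and the weight: $r\asymp N\ell$, $n\asymp N$ (after executing the $m$-sum trivially, or rather keeping $nm^2\asymp N$), and $c$ essentially bounded by $\sqrt{nr}/(pM)\ll \sqrt{N^2\ell}/(PM) = N\sqrt{\ell}/(PM)$, which under $L<M^2/N$ is small—indeed for a large part of the range $c=O(1)$ or even the $c$-sum is empty except for a short initial segment.

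The heart of the argument is the treatment of the character sum and the $r$-sum. Here I would first open the Kloosterman sum $S_\psi(n\ell,r;cpM)$ and use the factorization of the modulus $cpM$ into coprime parts, isolating the modulus-$M$ component; the character $\chi(r)$ interacts with the modulus-$M$ Kloosterman-type sum, and because $\psi$ is a character mod $p$ the modulus-$p$ component carries the $\psi$-dependence. The plan is then to apply reciprocity (as signalled in the text: $\mathcal{L}$ "takes up the role of a modulus only after an application of reciprocity") to move the $\ell$-dependence out of one modulus and into a new, smaller modulus, thereby splitting the effective modulus into a product of two pieces of controlled size. After this preparation, one applies Cauchy–Schwarz in the $n$ (and $m$) variables—dualizing away the $GL(3)$ coefficients $\lambda(m,n)$, whose second moment is bounded by the Rankin–Selberg bound $\sum_{nm^2\ll N}|\lambda(m,n)|^2\ll N^{1+\varepsilon}$ (this is where the argument avoids Ramanujan: only the average is used)—followed by Poisson summation in $n$ modulo the split modulus. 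The Poisson step replaces the $n$-sum of length $N$ by a dual sum of length roughly (modulus${}^2$)$/N$, and the key point is that the modulus is now small enough (thanks to the $\mathcal{L}$-splitting and reciprocity) that this dual sum is genuinely shorter; the resulting character sums are (incomplete) exponential sums which I would evaluate by standard stationary-phase / Gauss-sum analysis, with the diagonal term in the Poisson dual contributing the main term and the off-diagonal being negligible after another application of the Weil bound for the remaining Kloosterman sums.

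Assembling the pieces: after Cauchy–Schwarz the bound takes the form $N^{1/2}M^\varepsilon\cdot(\text{square root of the dualized sum})$; the dualized sum decomposes into a diagonal part giving the $P^{1/2}M^{1/2+9\theta}$ term and a shifted/off-diagonal part, which after Poisson and Weil yields the second bracketed expression, with the $L^{3/4}$ versus $M^{11\theta/4}P^{3/4}/M^{1/8}$ dichotomy reflecting whether the zero frequency or the nonzero frequencies dominate in the Poisson dual sum. The factors of $M^\theta$ throughout come from the fact that $V$ is supported down to $M^{-4\theta}$, so each analytic manipulation (integration by parts, stationary phase) costs a bounded power of $M^\theta$; one must be careful to track these honestly, and the constraint $\theta<1/2$ keeps them under control. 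The main obstacle, I expect, is the bookkeeping at the reciprocity step: one has to choose correctly which part of $cpM\ell$ plays the role of the Poisson modulus and which is split off, and verify that the resulting modulus for Poisson is $\ll M^{1+o(1)}/(\text{something})$ so that the dual length $N$ (which is $\asymp M^{3/2\pm\theta}$) is actually shortened—getting this balance right, together with the condition $L\geq M^{2\theta+\varepsilon}$ which ensures the $\ell$-sum in the new modulus is long enough for cancellation, is the crux of why Proposition 2 holds in exactly the stated range.
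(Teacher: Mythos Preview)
Your plan has a fundamental misconception about what Proposition~2 asserts and how $\mathcal{F}$ relates to the rest of the argument. Applying the Petersson formula directly to $\mathcal{F}$ as defined in \eqref{F} does \emph{not} produce a small Kloosterman remainder that you can then bound: it reproduces precisely the decomposition $\mathcal{F}=S^\star(N)+2\pi i\,\mathcal{O}$ already stated in the introduction. The ``Kloosterman piece'' you propose to bound is $\mathcal{O}$, which is the subject of Proposition~1, not Proposition~2. The diagonal you propose to ``exclude'' is $S^\star(N)$ --- the very quantity whose size the entire theorem is about --- so you cannot drop it, and no bound on $\mathcal{F}$ follows from your scheme.

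The paper's argument is structurally different. One first \emph{dualizes} the sums inside $\mathcal{F}$: after splitting $\lambda_f(n\ell)=\lambda_f(n)\lambda_f(\ell)$ and restricting to newforms, one applies the $GL(3)\times GL(2)$ functional equation to the $(m,n)$-sum and $GL(2)$ Voronoi to the $r$-sum, obtaining dual sums $\mathcal{D}^\star$ of new lengths $\tilde N\asymp P^3M^3/N$ and $\tilde R\asymp M^{2}P/(NL)$ (up to $M^{O(\theta)}$). Only then does one apply the Petersson formula, and now the diagonal vanishes (since $\chi(M)=0$) while the off-diagonal Kloosterman sum has a genuinely different shape. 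From there one applies the $GL(3)$ Voronoi formula to the $n$-sum, reciprocity on the resulting additive character, Poisson summation in the modulus variable $c$, and finally Cauchy in $(n,r)$ followed by Poisson in $n$. The $L^{3/4}$ versus $M^{11\theta/4}P^{3/4}/M^{1/8}$ dichotomy arises from the diagonal/off-diagonal split in this last Poisson step, and the first term $P^{1/2}M^{1/2+9\theta}$ comes from the case where the dual $c$-range is nontrivial and the Weil bound alone suffices. None of this is accessible from your starting point, because without the dualization via functional equations the Petersson formula simply unwinds $\mathcal{F}$ back to where you began.
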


\bigskip

Indeed plugging the bounds from the above propositions we obtain
\begin{align*}
L(\tfrac{1}{2},\pi\otimes\chi)\ll & M^\varepsilon\left\{P^{1/2}M^{1/2+9\theta}+ \frac{P^{1/4}M^{5/8+17\theta/4}}{L^{1/2}}\left(L^{3/4}+\frac{M^{11\theta/4}P^{3/4}}{M^{1/8}}\right)\right\}\\
&+M^{\varepsilon}\left\{\frac{M^{3/4+3\theta/2}L}{P}+\frac{M^{1+\theta}}{P}\right\}+ M^{3/4-\theta/2+\varepsilon}.
\end{align*}
Then we optimally choose the three parameters - $P$, $L$ and $\theta$. It turns out that we will have $M^{4\theta}L\ll P$. So that the bound in the above corollary reduces to
\begin{align*}
\frac{P^{1/4}M^{5/8+17\theta/4+\varepsilon}}{L^{1/2}}\left(L^{3/4}+\frac{M^{11\theta/4}P^{3/4}}{M^{1/8}}\right)+\frac{M^{1+\theta+\varepsilon}}{P}+ M^{3/4-\theta/2+\varepsilon}.
\end{align*}
The optimum choice of $L$ is obtained by equating the first two terms. This gives $L=PM^{-1/6+11\theta/3}$, and reduces the above bound to
\begin{align*}
M^{7/12+31\theta/6+\varepsilon}P^{1/2}+\frac{M^{1+\theta+\varepsilon}}{P}+ M^{3/4-\theta/2+\varepsilon}.
\end{align*}
Equating the first two terms we now get the optimum choice for $P$, which turns put to be $P=M^{5/18-25\theta/9}$. Ultimately we find that the optimum choice of $\theta$ is given by $\theta=1/154$. This completes the proof of the Theorem.\\

\textbf{Notation:} Suppose $\mathcal{A}\ll M^\varepsilon\sum_{b\in\mathcal{F}}|\mathcal{B}_b|+M^{-2016}$ where $|\mathcal{F}|\ll M^\varepsilon$ and the implied constants depend only on $\varepsilon$. Then we write
$$
\mathcal{A}\lhd_\mathcal{F} \mathcal{B}_b,\;\;\;\;\text{or simply as}\;\;\;\;\mathcal{A}\lhd \mathcal{B},
$$
where there is no scope of confusion. \\

\ack
The author wishes to thank Roman Holowinsky and Zhi Qi for several helpful discussions related to the method presented in this paper. He thanks Qi for pointing out the cancellation of the oscillatory factor of the Bessel function, which is used in the proof of Lemma~\ref{observe}.

\bigskip

\section{Outline of the proof}

In this section we give a brief outline of the proof. Let $N=M^{3/2}$ and temporarily assume the Ramanujan conjecture $|\lambda(m,n)|\ll (mn)^\varepsilon$. First consider the off-diagonal term as given in \eqref{off-diag-1}. For convenience assume that $m=1$, and that $c$ is in the transition range, i.e. $c\sim C=NL/PM=M^{1/2}L/P$. Consider the generic case $p\nmid c$. The character sum 
\begin{align*}
\sum_{\psi\bmod{p}}\left(1-\psi(-1)\right) S_\psi(r,n\ell;cpM)
\end{align*}
can be partially evaluated, and one gets
\begin{align*}
pS(\bar{p}r,\bar{p}n\ell; cM)e\left(\pm\frac{\overline{cM}(r+n\ell)}{p}\right).
\end{align*}
Applying the reciprocity relation we see that the sum \eqref{off-diag-1} is essentially given by
\begin{align*}
\mathcal{O}\approx &\frac{1}{NL^2P}\sum_{\substack{P<p<2P\\p\;\text{prime}}}\; \mathop{\sum}_{n\sim N} \lambda(1,n)\: \sum_{\ell\in\mathcal{L}}\sum_{r\sim NL} \chi(r\bar{\ell})\sum_{c\sim C} S(\bar{p}r,\bar{p}n\ell; cM)e\left(\frac{\overline{p}(r+n\ell)}{cM}\right).
\end{align*}
Notice the presence of $M$ in the modulus $cM$. This acts as a conductor lowering trick as in \cite{Mu4}. \\

Assuming the Weil bound, we see that the Petersson formula gives a saving of size $\sqrt{PM}/\sqrt{C}$, and in addition we have saved $\sqrt{P}$ in the sum over $\psi$. 
Next we apply the Poisson summation formula on the $r$ sum. Since the length of the sum $r\sim NL$ is larger than the modulus $cM\sim NL/P$, we are only left with the zero frequency. Hence from Poisson we save $\sqrt{CM}$. Our initial target was to save $NL$, and so far we have saved $PM$. So now it remains to save $M^{1/2}L/P$, in the sum
\begin{align*}
\sum_{\substack{P<p<2P\\p\;\text{prime}}}\; \mathop{\sum}_{n\sim N} \lambda(1,n)\: \sum_{\ell\in\mathcal{L}}\sum_{c\sim C} \chi(pc\bar{\ell}) \mathfrak{D}(\bar{p}\bar{c}n\ell; M)
\end{align*}
where the character sum $\mathfrak{D}$ is as given in \eqref{d-char-sum}. We apply the Cauchy inequality and reduce the problem to that of saving $ML^2/P^2$ in the sum
\begin{align*}
\mathop{\sum}_{n\sim N}\Bigl|\sum_{\substack{P<p<2P\\p\;\text{prime}}}\;  \sum_{\ell\in\mathcal{L}}\sum_{c\sim C} \chi(pc\bar{\ell}) \mathfrak{D}(\bar{p}\bar{c}n\ell; M)\Bigr|^2.
\end{align*}
Next we open the absolute square (after smoothing) and apply the Poisson summation on the sum over $n$ with modulus $M$. Only the zero frequency survives. In the diagonal we at most save $PLC\sim M^{1/2}L^2$ (which will be smaller than the modulus $M$) and in the off-diagonal we save $M$. Crucially the structure of the character sum $\mathfrak{D}$ is such that for the zero-th frequency the saving is the full modulus $M$ and not just the square-root of the modulus. It  turns out that the off-diagonal $\mathcal{O}$ is fine if $P\gg \max\{L,M^{1/4}\}$. This is the content of Proposition~\ref{prop1}. Note that we do not require to utilize the oscillation in the  Fourier coefficients $\lambda(1,n)$ (as in \cite{Mu5}).\\

Next we consider the sum $\mathcal{F}$ as given in \eqref{F}. This is essentially given by
\begin{align*}
\frac{1}{L P^2}\:\sum_{\substack{P<p<2P\\p\;\text{prime}}}\;\sum_{\psi\bmod{p}}\left(1-\psi(-1)\right)\sum_{f\in H_k(pM,\psi)}\omega_f^{-1}\:\sum_{\ell\in\mathcal{L}}\bar{\chi}(\ell)\lambda_f(\ell)\mathop{\sum}_{n\sim N} \lambda(1,n)\lambda_f(n)\sum_{r\sim NL}\overline{\lambda_f(r)}\chi(r).
\end{align*}
We apply functional equation to the $n$ sum and ($GL(2)$) Voronoi summation to the $r$ sum. We save $N/(P^3M^3)^{1/2}$ in the $GL(3)\times GL(2)$ functional equation and save $NL/MP^{1/2}$ in the Voronoi summation. As initially we needed to save $NL$, it follows that we now need to save $MP^2$ in the sum
\begin{align*}
\sum_{\substack{P<p<2P\\p\;\text{prime}}}\;&\sum_{\psi\bmod{p}}\chi(p)\left(1-\psi(-1)\right)\psi(M)g_\psi^2\;\sum_{f\in H_k(pM,\psi)}\omega_f^{-1}\\
&\times \mathop{\sum}_{n\sim NP^3} \lambda(1,n)\bar{\lambda}_f(p^2Mn)\:\sum_{\ell\in\mathcal{L}}\sum_{r\sim M^{1/2}P/L} \lambda_f(r\ell)\bar{\chi}(r\ell).
\end{align*}
Observe that $r$ and $\ell$ occur together and it appears as if we have split the dual variable into a product of two variables, whose size we can regulate by choosing $L$. We apply the Petersson formula to arrive at
\begin{align*}
\sum_{\substack{P<p<2P\\p\;\text{prime}}}\;&\sum_{\psi\bmod{p}}\chi(p)\left(1-\psi(-1)\right)\psi(M)g_\psi^2\\
&\times \mathop{\sum}_{n\sim NP^3} \lambda(1,n)\:\sum_{\ell\in\mathcal{L}}\sum_{r\sim M^{1/2}P/L} \bar{\chi}(r\ell)\sum_{c\sim C}S_\psi(p^2Mn,r\ell;cpM),
\end{align*}
where the new transition range is given by $C=M^{1/2}P^2$. We have saved $\sqrt{PM}/\sqrt{C}$ (assuming Weil) from Petersson and we now need to save $M^{3/4}P^{5/2}$. The Kloosterman sum splits as 
$$
S(Mn,r\ell;cM)S_\psi(0,r\ell\overline{cM};p).
$$
The $\psi$ sum now gives a saving of size $P^{1/2}$ and the $GL(3)$ Voronoi summation gives a saving of size $M^{3/4}$. (Notice that the modulus is just $c$.) Also the $GL(3)$ Voronoi transforms the Kloosterman sum into an additive character. Our job reduces to saving $P^2$ in the sum 
\begin{align*}
\sum_{\substack{P<p<2P\\p\;\text{prime}}}\;\chi(p)\sum_{\ell\in\mathcal{L}}\;\sum_{r\sim M^{1/2}P/L}\;\mathop{\sum}_{\substack{c\sim C}}\bar{\chi}(r\ell)e\left(\frac{c\overline{r\ell}}{p}\right)\;\sum_{n\sim P^3} \lambda(n,1)e\left(-\frac{\overline{r\ell}Mn}{c}\right).
\end{align*}
Applying reciprocity we obtain
\begin{align*}
\sum_{\substack{P<p<2P\\p\;\text{prime}}}\;\chi(p)\sum_{\ell\in\mathcal{L}}\;\sum_{r\sim M^{1/2}P/L}\;\mathop{\sum}_{\substack{c\sim C}}\bar{\chi}(r\ell)e\left(-\frac{c\overline{p}}{r\ell}\right)\;\sum_{n\sim P^3} \lambda(n,1)e\left(\frac{\overline{c}Mn}{r\ell}\right).
\end{align*}
We can now apply the Poisson summation on the sum over $c$. The length of the sum is $M^{1/2}P^2$ and the modulus is $r\ell\sim M^{1/2}P$. So we are just left with the zero frequency and get a saving of $M^{1/4}P^{1/2}$. It remains to save $P^{3/2}/M^{1/4}$ in the sum
\begin{align*}
\sum_{\substack{P<p<2P\\p\;\text{prime}}}\;\chi(p)\sum_{\ell\in\mathcal{L}}\mathop{\sum}_{\substack{r\sim M^{1/2}P/L}}\bar{\chi}(r\ell)\;\sum_{n\sim P^3} \lambda(n,1)S(Mn,-\bar{p};r\ell).
\end{align*}
Applying Cauchy we see that we need to save $P^{3}/M^{1/2}$ in the sum
\begin{align*}
\sum_{n\sim P^3}\mathop{\sum}_{\substack{r\sim M^{1/2}P/L}}\Bigl|\sum_{\substack{P<p<2P\\p\;\text{prime}}}\;\sum_{\ell\in\mathcal{L}}\chi(p\bar{\ell})\; \lambda(n,1)S(Mn,-\bar{p};r\ell)\Bigr|^2.
\end{align*}
The diagonal is fine if $PL>P^3/M^{1/2}$ or $L>P^2/M^{1/2}$. In the off-diagonal we save $P^3/M^{1/4}(PL)^{1/2}$, which is enough if $L<M^{1/2}/P$. This is the content of Proposition~\ref{prop2}. In particular we have a choice for $L$ as long as $P<M^{1/3}$. Recall that the off-diagonal $\mathcal{O}$ was fine if $P>M^{1/4}$. Consequently we have a choice for the parameters $P$ and $L$ such that we have subconvex bounds for both $\mathcal{F}$ and $\mathcal{O}$.\\

\section{The off-diagonal}

 The off-diagonal contribution $\mathcal{O}$, is analyzed in the same spirit as Section 4 of \cite{Mu5}. After executing the $\psi$ sum we apply the reciprocity relation and then the Poisson summation on the sum over $r$. Next we get rid of the Fourier coefficients using the Cauchy inequality, and apply the Poisson summation on the sum over $n$.  For notational convenience we will only consider the subsum of \eqref{off-diag-1} where $p\nmid c$, which will be denoted by $\mathcal{O}_0$. The other case can be tackled in the same fashion, and we at the end get a stronger bound for that sum. Let 
\begin{align}
\label{d-char-sum}
\mathfrak{D}(u;M)=\sum_{\substack{b\bmod{M}\\(b(b-1),M)=1}}\bar{\chi}(b-1)e\left(\frac{(\bar{b}-1)u}{M}\right),
\end{align}
and
\begin{align*}
\mathfrak{I}(n,p,\ell;cM)=\int_\mathbb{R} e\left(\frac{N\ell y+n\ell}{cpM}\right)J_{k-1}\left(\frac{4\pi\sqrt{Nn\ell^2 y}}{cpM}\right)V(y)\mathrm{d}y.
\end{align*}
Let $\mathcal{C}=NLM^\varepsilon/PM$.\\

\begin{lem}
\label{lem-31}
We have
\begin{align}
\label{o-not-1}
\mathcal{O}_0\;\lhd\; \frac{N}{P^2M^{3/2}}\sum_{\substack{P<p<2P\\p\;\text{prime}}}\;\mathop{\sum\sum}_{m,n=1}^\infty \lambda(m,n)W\left(\frac{nm^2}{N}\right)\; \sum_{\ell\in\mathcal{L}} \chi(\bar{\ell})\;\sum_{c\ll \mathcal{C}} \chi(pc)\:\frac{\mathfrak{D}(\overline{pc}n\ell;M)}{c}\mathfrak{I}(n,p,\ell;cM).
\end{align} 
\end{lem}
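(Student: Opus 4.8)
The plan is to start from the off-diagonal sum $\mathcal{O}$ in \eqref{off-diag-1}, restricted to the generic piece $p\nmid c$, and execute the $\psi$-sum first. Recall that $\sum_{\psi\bmod p}(1-\psi(-1))=2(p-1)$ picks out the odd part of the Kloosterman sum, and the standard factorization of Kloosterman sums for coprime moduli gives $S_\psi(r,n\ell;cpM)$ as a product of a Kloosterman sum modulo $cM$ and a $\psi$-twisted Kloosterman sum modulo $p$. After summing over $\psi$ one is left, in the $p\nmid c$ case, with $pS(\overline{p}r,\overline{p}n\ell;cM)\,e(\pm\overline{cM}(r+n\ell)/p)$ up to an overall constant and lower-order terms, exactly as recorded in the outline in Section 2. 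This converts the triple Kloosterman sum into a Kloosterman sum to modulus $cM$ times a pure additive character to the prime modulus $p$. The factor $p$ cancels the $p$ in the denominator $cpM$ of \eqref{off-diag-1}, which is where the prefactor $N/(P^2M^{3/2})$ in \eqref{o-not-1} will come from once one also accounts for the $1/(L^\star P^\star)\sim P/(LP^2\log P)$ and the $1/(cpM)$.

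Next I would apply the reciprocity relation $\overline{cM}/p\equiv -\overline{p}/(cM)+1/(pcM)\pmod 1$ to the additive character $e(\pm\overline{cM}(r+n\ell)/p)$, transferring the oscillation to modulus $cM$ (the small correction $1/(pcM)$ being absorbed into the weight function and the Bessel integral, which is the origin of $\mathfrak{I}(n,p,\ell;cM)$ after substituting $r=N\ell y$ and opening the $V$-integral). This brings all the arithmetic to modulus $cM$: one now has $S(\overline{p}r,\overline{p}n\ell;cM)\,e(-\overline{p}(r+n\ell)/(cM))$ together with the character $\chi(r\bar\ell)$. The crucial observation is that the $r$-sum has length $\sim N\ell\gg NL\gg$ the modulus $cM$ (since $c\ll\mathcal{C}=NLM^\varepsilon/PM$ gives $cM\ll NL/P$), so Poisson summation in $r$ modulo $cM$ leaves only the zero frequency. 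The resulting complete character sum in $r$ modulo $cM$ — combining $\chi(r)$, the Kloosterman sum's dependence on $r$, and the additive character in $r$ — needs to be evaluated; by Chinese Remainder Theorem it splits into a piece mod $M$ and a piece mod $c$. The piece mod $M$ is precisely $\mathfrak{D}(\overline{pc}\,n\ell;M)$ after unfolding $\chi$ and the congruence condition coming from the Kloosterman sum (the $(b(b-1),M)=1$ condition records that the relevant residue and its shift by $1$ must be invertible), and the mod-$c$ piece collapses to something harmless, leaving the clean factor $\chi(pc)/c$ and the Bessel integral $\mathfrak{I}$ over the $y$-variable. Collecting everything — the prefactors, the surviving zero frequency, and the weight functions — yields exactly \eqref{o-not-1}, with the $\lhd$ accounting for the truncation $c\ll\mathcal{C}$ (the tail being negligible by rapid decay of $\mathfrak{I}$, since $J_{k-1}$ is small when its argument is small for large $k$) and for the $O(M^\varepsilon)$ many dyadic $\psi$-auxiliary pieces.

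The main obstacle I expect is the bookkeeping in the Poisson step: one must carefully track how $\chi(r)$, the two occurrences of $r$ inside $S(\overline{p}r,\overline{p}n\ell;cM)$ and inside $e(-\overline{p}(r+n\ell)/(cM))$ interact after opening the Kloosterman sum as $\sum_{d}e((\overline{p}rd+\overline{pd}n\ell)/(cM))$, and verify that completing the $r$-sum and isolating the zero frequency indeed produces the specific character sum $\mathfrak{D}$ with its shift-by-one structure rather than some messier Salié-type object. In particular one needs $(M,c)=1$ (automatic if $M$ is prime and $c<M$, which holds in our range since $\mathcal{C}\ll NL/PM\ll N/P\ll M^{3/2}$ but more precisely we only need $p\nmid c$ and can separate the $M\mid c$ case) and one should confirm that the variable $b\bmod M$ in \eqref{d-char-sum} is exactly the image, under CRT, of the Poisson-dual residue class. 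The secondary technical point is justifying that the $m$-sum and the $n$-sum may be left intact — they are mere spectators in this manipulation — and that the error terms from reciprocity and from the non-generic $p\mid c$ contribution are genuinely of lower order, which is where the hypothesis that we get a stronger bound in the $p\mid c$ case is invoked and postponed.
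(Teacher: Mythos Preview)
Your proposal is correct and follows essentially the same route as the paper: execute the $\psi$-sum to get $pS(\bar p r,\bar p n\ell;cM)e(\pm\overline{cM}(r+n\ell)/p)$, apply reciprocity to move the additive character to modulus $cM$, run Poisson in $r$ modulo $cM$ (only the zero frequency survives), and then split the resulting complete character sum via CRT into a mod-$c$ piece (which equals exactly $c$) and a mod-$M$ piece (which produces $g_\chi\,\chi(pc)\,\mathfrak{D}(\overline{pc}n\ell;M)$). Two small bookkeeping remarks: the factor $\chi(pc)$ and the $M^{1/2}$ in the prefactor both come from the Gauss sum in the mod-$M$ computation, not from the mod-$c$ piece; and $P^\star\asymp P^2$ (not $\asymp P$), so $1/(L^\star P^\star)\asymp 1/(LP^2)$.
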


\begin{proof}
Consider the sum in \eqref{off-diag-1} with $p\nmid c$ (the generic case).
The Bessel function is negligibly small if $c\gg \mathcal{C}$. Hence we only consider $c\ll \mathcal{C}$. (So $(c,M)=1$.)
As $p\nmid c$, the character sum 
\begin{align*}
\sum_{\psi\bmod{p}}\left(1-\psi(-1)\right) S_\psi(r,n\ell;cpM)
\end{align*}
can be replaced by
\begin{align*}
pS(\bar{p}r,\bar{p}n\ell; cM)e\left(\pm\frac{\overline{cM}(r+n\ell)}{p}\right).
\end{align*}
Consider the plus term. We apply the reciprocity relation
\begin{align*}
e\left(\frac{\overline{cM}(r+n\ell)}{p}\right)=e\left(-\frac{\overline{p}(r+n\ell)}{cM}\right)e\left(\frac{(r+n\ell)}{cpM}\right),
\end{align*}
and then push the last term to the weight function. \\

Next we apply the Poisson summation formula on the sum over $r$ with modulus $cM$. It turns out that the nonzero frequencies make a negligible contribution. Hence the sum
\begin{align*}
\sum_{r=1}^\infty \chi(r)S(\bar{p}r,\bar{p}n\ell; cM)e\left(-\frac{\overline{p}(r+n\ell)}{cM}\right)e\left(\frac{(r+n\ell)}{cpM}\right)V\left(\frac{r}{N\ell}\right)J_{k-1}\left(\frac{4\pi\sqrt{n\ell r}}{cpM}\right),
\end{align*}
upto a negligible error term, 
reduces to
\begin{align*}
\frac{N\ell}{cM}\mathfrak{C}(n,p,\ell;cM)\:\mathfrak{I}(n,p,\ell;cM)
\end{align*}
where the character sum is given by
\begin{align*}
\mathfrak{C}(n,p,\ell;cM)=\sum_{a\bmod{cM}}\:\chi(a)S(\bar{p}a,\bar{p}n\ell; cM)e\left(-\frac{\overline{p}(a+n\ell)}{cM}\right)
\end{align*}
and $\mathfrak{I}(n,p,\ell;cM)$ as above.
Using the standard bound for the Bessel function and the second derivative bound we get that $\mathfrak{I}(\dots)\ll cPMm/NL$. It follows that we have
\begin{align*}
\mathcal{O}_0\;\lhd\;&\frac{N}{P^2M^2}\sum_{\substack{P<p<2P\\p\;\text{prime}}}\;\mathop{\sum\sum}_{m,n=1}^\infty \lambda(m,n)W\left(\frac{nm^2}{N}\right)\; \sum_{\ell\in\mathcal{L}} \chi(\bar{\ell})\;\sum_{c\ll \mathcal{C}} \frac{\mathfrak{C}(n,p,\ell;cM)}{c^2}\mathfrak{I}(n,p,\ell;cM).
\end{align*}\\

The character sum can be partially evaluated. First it decomposes as a product of two character sums. The one with modulus $c$ is given by
\begin{align*}
\mathfrak{C}_1=\sum_{a\bmod{c}}\:S(\bar{p}\bar{M} a,\bar{p}\bar{M} n\ell; c)e\left(-\frac{\overline{pM}(a+n\ell)}{c}\right)=c.
\end{align*}
The other one with modulus $M$ is given by
\begin{align*}
\mathfrak{C}_2&=\sum_{a\bmod{M}}\:\chi(a)S(\bar{p}\bar{c} a,\bar{p}\bar{c} n\ell; M)e\left(-\frac{\overline{pc}(a+n\ell)}{M}\right)\\
&=\sideset{}{^\star}\sum_{b\bmod{M}}e\left(\frac{(\bar{b}-1)\overline{pc}n\ell}{M}\right)\sum_{a\bmod{M}}\:\chi(a)e\left(\frac{(b-1)\overline{pc}a}{M}\right).
\end{align*}
The inner sum vanishes unless $(b-1,M)=1$, in which case it is given by $\chi(pc)\bar{\chi}(b-1)g_\chi$. One gets
\begin{align*}
\mathfrak{C}(\dots)=cg_\chi \chi(pc)\sum_{\substack{b\bmod{M}\\(b(b-1),M)=1}}\bar{\chi}(b-1)e\left(\frac{(\bar{b}-1)\overline{pc}n\ell}{M}\right).
\end{align*}
\end{proof}

Applying the Cauchy inequality to the right hand side of \eqref{o-not-1}, we arrive at
\begin{align}
\label{o-not}
\mathcal{O}_0\;\lhd\;&\frac{N}{P^2M^{3/2}}\;\mathfrak{V}^{1/2}\;\mathfrak{U}^{1/2},
\end{align}
where
\begin{align*}
\mathfrak{V}=\;\mathop{\sum\sum}_{nm^2\sim N} m\;|\lambda(m,n)|^2 
\end{align*}
and
\begin{align*}
\mathfrak{U}=\;\mathop{\sum\sum}_{nm^2\sim N} \frac{1}{m}\; \Bigl|\sum_{\substack{P<p<2P\\p\;\text{prime}}}\sum_{\ell\in\mathcal{L}}\sum_{c\ll \mathcal{C}}\frac{\chi(pc\bar{\ell})}{c} \;\mathfrak{D}(\overline{pc}n\ell;M)\mathfrak{I}(n,p,\ell;cM)\Bigr|^2.
\end{align*}

\begin{lem}
\label{uv-bd}
We have
\begin{align*}
\mathfrak{V} &\ll N^{1+\varepsilon}
\end{align*}
and
\begin{align*}
\mathfrak{U}\ll  NL^2P^2+M^2P^2.
\end{align*}
\end{lem}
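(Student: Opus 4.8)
The plan is to treat the two bounds by quite different means. The bound $\mathfrak{V}\ll N^{1+\varepsilon}$ is the standard Rankin--Selberg input: writing $nm^2\sim N$ and $m\le N^{1/2}$, the weight $m$ is compensated by the average of $|\lambda(m,n)|^2$, since $\sum_{nm^2\le X}|\lambda(m,n)|^2\ll X^{1+\varepsilon}$ follows from the analytic properties of the Rankin--Selberg $L$-function $L(s,\pi\times\tilde\pi)$. Dyadically decomposing the $m$-range and summing the trivial geometric series over $m$ gives $\mathfrak{V}\ll N^{1+\varepsilon}$ with no appeal to the Ramanujan conjecture. I would dispatch this in one line.

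The real content is the bound for $\mathfrak{U}$. First I would record the pointwise size of the oscillatory integral: from the second-derivative bound used in Lemma \ref{lem-31} one has $\mathfrak{I}(n,p,\ell;cM)\ll cPMm/NL$, and more precisely $\mathfrak{I}$ is a smooth function concentrated on $c$ of size $\sim\mathcal{C}=NL/PM$. Then I would open the absolute square in $\mathfrak{U}$, smooth the $n$-sum by inserting a bump function supported on $nm^2\sim N$, and swap the order of summation so that the $n$-sum is innermost: this produces, for each pair $(p,\ell,c)$ and $(p',\ell',c')$, a sum over $n$ of $\lambda$-free weight times $\mathfrak{D}(\overline{pc}n\ell;M)\overline{\mathfrak{D}(\overline{p'c'}n\ell';M)}$ against a smooth weight, and I would Poisson-sum in $n$ modulo $M$ (the modulus of the character sum $\mathfrak{D}$). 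The key structural point, exactly as flagged in the outline, is that the resulting complete character sum in the zero frequency is $\sum_{n\bmod M}\mathfrak{D}(\overline{pc}n\ell;M)\overline{\mathfrak{D}(\overline{p'c'}n\ell';M)}$, and because $\mathfrak{D}$ is itself built from $\bar\chi(b-1)e((\bar b-1)u/M)$, this sum evaluates to size $M$ exactly (full modulus saved), not merely $M^{1/2}$ — one computes it by opening both copies of $\mathfrak{D}$, executing the $n$-sum to force a linear relation between the $b$-variables mod $M$, and reading off that the diagonal-type contribution is $O(M)$. Nonzero frequencies are negligible since the dual length $N/M$ is much larger than $M$ only in the sense that... here one checks the conductor: the $n$-sum has length $\sim N/m^2$ and modulus $M$, and with $N=M^{3/2\pm\theta}$ the frequency variable runs over a range making all nonzero terms contribute $O(M^{-2016})$ after repeated integration by parts in the smooth weight.

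After Poisson, the off-diagonal (i.e.\ $(p,\ell,c)\ne(p',\ell',c')$ surviving terms) contributes, up to $M^\varepsilon$, a factor $M$ from the complete sum, the length $N/(m^2 M)$ from the zero-frequency main term divided appropriately, and the free sums over $p,p'\sim P$, $\ell,\ell'\in\mathcal{L}$, $c,c'\ll\mathcal{C}$ weighted by $1/(cc')|\mathfrak{I}||\mathfrak{I}'|$; using $\mathfrak{I}\ll cPMm/NL$ and $\sum_{m}1/m$ against $\sum_n$ gives the bound $M^2P^2$ (the $M^2$ coming from $M\cdot M$ after the $n$-length and the two $\mathfrak{I}$-factors combine, and $P^2$ from the two $p$-sums, with the $\ell$- and $c$-sums absorbed by the size of $\mathcal{I}$ and the normalisation). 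The diagonal terms $p=p'$, $\ell=\ell'$, $c=c'$ contribute the complementary piece: here one saves only $PLC\sim M^{1/2}L^2$ rather than $M$, and tracking the same normalisations yields $NL^2P^2$. Adding the two pieces gives $\mathfrak{U}\ll NL^2P^2+M^2P^2$. The main obstacle I anticipate is the exact evaluation of the complete character sum $\sum_{n\bmod M}\mathfrak{D}\overline{\mathfrak{D}}$ and verifying it is genuinely of size $M$ and not $M^{3/2}$ or $M^{2}$ — this is precisely the ``congruence-equation trick'' gain, and it is what forces the clean shape of Proposition \ref{prop1}; everything else is bookkeeping with smooth weights and geometric series.
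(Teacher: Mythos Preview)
Your strategy is the paper's: Rankin--Selberg/Hecke for $\mathfrak V$, and for $\mathfrak U$ open the square, Poisson in $n$ modulo $M$, then evaluate the correlation $\sum_{a\bmod M}\mathfrak D(\overline{pc}a\ell;M)\overline{\mathfrak D(\overline{p'c'}a\ell';M)}$. Two places in your sketch do not yet go through as written.

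First, the weight $\mathfrak I(n,\dots)$ is \emph{not} smooth in $n$ in the relevant sense: when $c$ is well below the transition range $\mathcal C$, the Bessel factor oscillates in $n$, so ``repeated integration by parts in the smooth weight'' will not kill the nonzero Poisson frequencies from a sum of length $N/m^2$. The paper fixes this by first enlarging the $n$-range (using positivity after the square is opened) to $\mathcal N=\max\{N/m^2,\,NL/(CPm)\}M^\varepsilon$; with this longer range the oscillation of $\mathfrak I$ is tame relative to $1/M$ and only the zero frequency survives. You should insert this enlargement step.

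Second, the complete character sum $\mathfrak C_3$ is not ``size $M$ exactly'' uniformly: opening both copies of $\mathfrak D$ and summing over $a$ forces $\bar b'\equiv 1+(\bar b-1)v\bmod M$ with $v\equiv\overline{pc}\ell\,p'c'\bar\ell'$, and one finds $\mathfrak C_3=M(M-2)$ when $v\equiv 1\bmod M$ and $\mathfrak C_3=O(M)$ otherwise. It is precisely this dichotomy that produces the two terms in the lemma: the $\sim PLC$ ``diagonal'' tuples with $\mathfrak C_3\sim M^2$ give $NL^2P^2$, while the generic tuples with $\mathfrak C_3\ll M$ give $M^2P^2$. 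Your later split into diagonal and off-diagonal is on the right track, but the claim that the diagonal-type contribution to $\mathfrak C_3$ is $O(M)$ is the wrong way round; once you record $M^2$ versus $M$ and combine with $\mathfrak J\ll (CPMm/NL)^2$, the bookkeeping in the paper (summing over $m\ll\mathcal M$ with the factor $\mathcal N/m$) gives exactly $NL^2P^2+M^2P^2$ under the standing assumptions $L<P$ and $\theta<1/2$.
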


\begin{proof}
The Hecke relation implies that $|\lambda(m,n)|^2\ll N^\varepsilon \sum_{d|(m,n)}|\lambda(m/d,1)|^2|\lambda(1,n/d)|^2$, and we have the Ramanujan bound on average
$$
\sum_{e\ll E}|\lambda(1,e)|^2\ll E^{1+\varepsilon}.
$$
It follows that
\begin{align*}
\mathfrak{V} &\ll N^\varepsilon\sum_{m\ll N^{1/2}}m\sum_{d|m}|\lambda(m/d,1)|^2 \;\mathop{\sum}_{n\ll N/m^2d} \;|\lambda(1,n)|^2 \\
&\ll N^{1+\varepsilon}\sum_{d\ll N^{1/2}}\frac{1}{d^2}\sum_{m\ll N^{1/2}/d}\:\frac{|\lambda(m,1)|^2}{m}\ll N^{1+\varepsilon}.
\end{align*}\\

Next we consider $\mathfrak{U}$. The Bessel function in the integral $\mathfrak{I}$ is negligibly small if $m\gg NLM^\varepsilon/PM=\mathcal{M}$. For $m\ll \mathcal{M}$, and $C\ll \mathcal{C}/m$ we consider
\begin{align*}
\mathfrak{U}_m=\;\mathop{\sum}_{n\in \mathbb{Z}} W\left(\frac{n}{\mathcal{N}}\right)\; \Bigl|\sum_{\substack{P<p<2P\\p\;\text{prime}}}\sum_{\ell\in\mathcal{L}}\sum_{c\sim C}\frac{\chi(pc\bar{\ell})}{c} \;\mathfrak{D}(\overline{pc}n\ell;M)\mathfrak{I}(n,p,\ell;cM)\Bigr|^2,
\end{align*}
where 
$$\mathcal{N}=\max\left\{N_0,\frac{NL}{CPm}\right\}M^\varepsilon,$$
and $W\in C^\infty (-1,1)$. Opening the absolute square we apply the Poisson summation on the outer sum with modulus $M$. Taking into account the possible oscillation in the weight function, we see that our choice of the extended length of the sum implies that the nonzero frequencies make a negligible contribution. It follows that
\begin{align*}
\mathfrak{U}_m=\;\frac{\mathcal{N}}{M}\mathop{\sum\sum}_{\substack{P<p,p'<2P\\p,p'\;\text{prime}}}\mathop{\sum\sum}_{\ell,\ell'\in\mathcal{L}}\mathop{\sum\sum}_{c,c'\sim C}\frac{\chi(pc\overline{p'c'}\bar{\ell}\ell')}{cc'} \;\mathfrak{C}_3\;\mathfrak{J}+O(M^{-2016})
\end{align*}
where the character sum is given by
\begin{align*}
\mathfrak{C}_3=\sum_{a\bmod{M}}\mathfrak{D}(\overline{pc}a\ell;M)\overline{\mathfrak{D}(\overline{p'c'}a\ell';M)}
\end{align*}
and the integral is given by
\begin{align*}
\mathfrak{J}=\int_\mathbb{R} \mathfrak{I}(\mathcal{N}y,p,\ell;cM)\overline{\mathfrak{I}(\mathcal{N}y,p',\ell';c'M)}W(y)\mathrm{d}y.
\end{align*}
We use the trivial bound $\mathfrak{J}\ll (CPMm)^2/(NL)^2$. Now consider the character sum which is given by
\begin{align*}
\mathfrak{C}_3=\mathop{\sum\sum}_{\substack{b,b'\bmod{M}\\(bb'(b-1)(b'-1),M)=1}}\bar{\chi}(b-1)\chi(b'-1)\sum_{a\bmod{M}} e\left(\frac{(\bar{b}-1)\overline{pc}a\ell-(\bar{b}'-1)\overline{p'c'}a\ell'}{M}\right).
\end{align*}
We get
\begin{align*}
\mathfrak{C}_3=M\mathop{\sum\sum}_{\substack{b,b'\bmod{M}\\(bb'(b-1)(b'-1),M)=1\\\bar{b}'\equiv 1+(\bar{b}-1)v\bmod{M}}}\bar{\chi}(b-1)\chi(b'-1)
\end{align*}
where $v\equiv \overline{pc}\ell p'c'\bar{\ell}'\bmod{M}$. Now
\begin{align*}
\overline{b-1}(\overline{1+(\bar{b}-1)v}-1)\equiv \overline{1+b(\bar{v}-1)}
\end{align*}
and consequently
\begin{align*}
\mathfrak{C}_3=M\mathop{\sum}_{\substack{b\bmod{M}\\(b(b-1),M)=1}}\bar{\chi}(1+b(\bar{v}-1))=\begin{cases} M(M-2) &\text{if}\;\;v\equiv 1\bmod{M}\\ O(M) &\text{otherwise}.
\end{cases}
\end{align*}
It follows that
\begin{align*}
\mathfrak{U}_m\ll \;\mathcal{N}\:\left(\frac{P^4M^2C^2m^2}{N^2}+\frac{P^3M^3Cm^2}{N^2L} \right),
\end{align*}
and we have
\begin{align*}
\mathfrak{U}\ll \sum_{m\ll\mathcal{M}}\;\frac{\mathcal{N}}{m}\:\left(\frac{P^4M^2C^2m^2}{N^2}+\frac{P^3M^3Cm^2}{N^2L} \right)\ll NL^2P^2+M^2P^2.
\end{align*}
In the last inequality we assumed that $L<P$ and $M<N$ (say $\theta<1/2$).
\end{proof}

Plugging in the bounds from Lemma~\ref{uv-bd} to \eqref{o-not} we get the bound of Proposition~\ref{prop1} for the sum $\mathcal{O}_0$. In the case $p|c$, we obtain a stronger bound by employing the above analysis. \\

\section{Treating the old forms}

In the rest of the paper we will prove Proposition~\ref{prop2}. To analyse the sum $\mathcal{F}$ we use  the functional equation for $GL(3)\times GL(2)$ Rankin-Selberg convolution (as in \cite{Mu5}). There are two new issues. First we need to split $\lambda_f(n\ell)$ using the Hecke relation and secondly we need to take care of the oldforms. To this end let $\mathcal{F}^\sharp$ be same as the expression in \eqref{F} with $\lambda_f(n\ell)$ replaced by $\lambda_f(n)\lambda_f(\ell)$.\\

\begin{lem}
Suppose $L\gg M^{2\theta+\varepsilon}$. We have
\begin{align*}
\mathcal{F}=\mathcal{F}^\sharp + O(M^{-2016}).
\end{align*}
\end{lem}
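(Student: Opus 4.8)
The plan is to exploit the Hecke relation $\lambda_f(n\ell)=\lambda_f(n)\lambda_f(\ell)-\psi(\ell)\lambda_f(n/\ell)$ for $\ell$ prime, where the last term appears only when $\ell\mid n$ and vanishes identically when $\ell\mid pM$ (which never happens here since $\ell<P$ and $(\ell,M)=1$). Thus $\mathcal{F}-\mathcal{F}^\sharp$ is, up to harmless constants, the same sum as \eqref{F} but with $\lambda_f(n\ell)W(nm^2/N)$ replaced by $\psi(\ell)\lambda_f(n/\ell)W(nm^2/N)$ restricted to $\ell\mid n$. Writing $n=\ell n'$, this is a sum over $n'\asymp N/(\ell m^2)$ of $\lambda_f(n')$, with the extra factor $\psi(\ell)$ and $\bar\chi(\ell)$ out front. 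The point is that after this substitution the $GL(2)$ variable paired against $r$ has effective length $N/\ell$, and the diagonal contribution of the Petersson formula (from \eqref{circ-meth}) is now controlled by the condition $n'=r$, i.e. $\ell n'=\ell r$, which forces $r$ in a range of length roughly $N/\ell$ against the original $r\sim N\ell$; since $L\gg M^{2\theta+\varepsilon}$ these ranges are disjoint and the diagonal is empty.

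Concretely, I would first insert the Hecke relation and isolate the new piece $\mathcal{E}:=\mathcal{F}-\mathcal{F}^\sharp$, which, after the change of variables $n=\ell n'$ and absorbing the characters, takes the shape
\begin{align*}
\mathcal{E}=\frac{1}{L^\star P^\star}\sum_{P<p<2P}\sum_{\psi\bmod p}(1-\psi(-1))\sum_{f\in H_k(pM,\psi)}\omega_f^{-1}\sum_{\ell\in\mathcal{L}}\psi(\ell)\bar\chi(\ell)\mathop{\sum\sum}_{m,n'}\lambda(m,\ell n')W\!\left(\frac{\ell n'm^2}{N}\right)\sum_{r}\overline{\lambda_f(r)}\chi(r)V\!\left(\frac{r}{N\ell}\right)\lambda_f(n').
\end{align*}
Then I would apply the Petersson formula \eqref{circ-meth} exactly as in the derivation of $S^\star(N)=\mathcal{F}-2\pi i\,\mathcal{O}$, producing a ``diagonal'' term with $n'=r$ and an off-diagonal Kloosterman term. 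Since the $r$-support is $r\asymp N\ell$ while $n'\asymp N/(\ell m^2)\le N/\ell$, and $L\gg M^{2\theta+\varepsilon}$ forces $N\ell>2N/\ell$ (using $\ell\ge L$), the diagonal condition $n'=r$ is never met: the diagonal contribution is exactly zero.

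It then remains to bound the off-diagonal piece of $\mathcal{E}$, i.e. the contribution of the Kloosterman/Bessel terms of \eqref{circ-meth}. This is structurally the same object as $\mathcal{O}$ in \eqref{off-diag-1}, but with $n\ell$ there replaced by $n'$, with $r$ still of size $N\ell$, and with the Fourier coefficient $\lambda(m,\ell n')$ instead of $\lambda(m,n)$. Since the modulus $cpM$ of the Kloosterman sum is at least $PM$, while the Bessel function $J_{k-1}(4\pi\sqrt{\,n'\ell r\,}/cpM)$ with $n'\ell r\asymp N^2$ is negligibly small unless $c\ll N^2/(PM)^2\cdot M^{\varepsilon}$, combined with the trivial bound for the Kloosterman sum and the (large) weight $k\asymp 1/\varepsilon$, one gets an enormous saving: the whole off-diagonal is $O(M^{-2016})$. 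More precisely, I would cut the $c$-sum at the transition range, bound everything trivially using $|\lambda(m,\ell n')|\ll (m\ell n')^{\varepsilon}$ on average (via the same moment bound $\sum_{e\ll E}|\lambda(1,e)|^2\ll E^{1+\varepsilon}$ used in Lemma~\ref{uv-bd}), and observe that the normalization $1/(L^\star P^\star)$ together with the Bessel decay in $k$ makes the result smaller than any fixed negative power of $M$.

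The main obstacle is bookkeeping: one must check carefully that the condition $\ell\mid pM$ never interferes (so that the Hecke relation has the clean two-term form for newforms \emph{and} extends appropriately to the oldform basis $H_k(pM,\psi)$), and that the extension of the orthogonal Hecke basis beyond newforms does not spoil the multiplicativity $\lambda_f(n\ell)=\lambda_f(n)\lambda_f(\ell)-\psi(\ell)\lambda_f(n/\ell)$ — which it does not, since every $f\in H_k(pM,\psi)$ is still a Hecke eigenform at primes $\ell\nmid pM$, and $\mathcal{L}$ consists of primes $\ell<P\le p$ coprime to $M$. Once this is in place, the vanishing of the diagonal is immediate from $L\gg M^{2\theta+\varepsilon}$ and the length mismatch, and the off-diagonal estimate is routine. \qed
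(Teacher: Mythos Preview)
Your reduction via the Hecke relation and the observation that the Petersson diagonal is empty (because $n'\le N/\ell$ cannot equal $r\ge N\ell M^{-4\theta}$ once $\ell\ge L\gg M^{2\theta+\varepsilon}$) match the paper exactly. The gap is in your treatment of the off-diagonal.

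After the substitution $n=\ell n'$ the Bessel function is $J_{k-1}(4\pi\sqrt{n'r}/cpM)$, not $J_{k-1}(4\pi\sqrt{n'\ell r}/cpM)$; in any case $n'r\asymp N^2/m^2$, so the transition range is $c\asymp N/(mpM)$, which for $N\sim M^{3/2}$ and the relevant $P$ is of size $M^{1/2}/P\gg 1$. For those $c$ the Bessel function has no decay, and bounding the Kloosterman sums trivially (with the normalization $1/(L^\star P^\star)$ and the fixed weight $k\asymp 1/\varepsilon$) does not produce anything close to $O(M^{-A})$. The ``enormous saving'' you invoke from large $k$ is illusory: $k$ is a constant, and for $c$ below the transition range the argument of $J_{k-1}$ is $\gg 1$, so one only has the oscillatory bound $|J_{k-1}(x)|\ll x^{-1/2}$.

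The paper handles this range differently: for $c\ll N M^{\varepsilon}/pM$ it applies Poisson summation to the $r$-sum modulo $cpM$. The length of the $r$-sum is $\asymp N\ell$ while the modulus is $\ll N M^{\varepsilon}/p$, and the oscillation from $V$ contributes at most $M^{2\theta}$; hence the condition $L\gg M^{2\theta+\varepsilon}$ is exactly what forces all nonzero Poisson frequencies to be negligible. For the zero frequency one is left with the complete character sum
\[
\sum_{a\bmod{cpM}}\chi(a)\,S_\psi(a,n';cpM),
\]
whose $p$-component equals $\sum_{a\bmod p}S_\psi(a,\ast;p)=\sum_{b}^{\!*}\psi(b)e(\ast\bar b/p)\sum_{a\bmod p}e(ab/p)=0$. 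This vanishing, not Bessel decay, is what makes the off-diagonal $O(M^{-2016})$. Your argument needs this Poisson-plus-vanishing step in place of the trivial bound.
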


\begin{proof}
On using the Hecke relation we get two terms, one of them being $\mathcal{F}^\sharp$. To tackle the other term, consider
\begin{align}
\label{hr}
\sum_{f\in H_k(pM,\psi)}\omega_f^{-1}\;\sum_{\ell\in\mathcal{L}}\bar{\chi}(\ell)\mathop{\sum\sum}_{m,n=1}^\infty \lambda(m,n\ell)\lambda_f(n)W\left(\frac{n\ell m^2}{N}\right)\sum_{r=1}^\infty \overline{\lambda_f(r)}\chi(r)V\left(\frac{r}{N\ell}\right).
\end{align}
To this we apply the Petersson formula. Observe that the diagonal does not exist and the off-diagonal is given by
\begin{align*}
\sum_{c=1}^\infty\sum_{\ell\in\mathcal{L}}\bar{\chi}(\ell)\mathop{\sum\sum}_{m,n=1}^\infty \lambda(m,n\ell)W\left(\frac{n\ell m^2}{N}\right)\sum_{r=1}^\infty \chi(r)\frac{S_\psi(r,n;cpM)}{cpM}J_{k-1}\left(\frac{4\pi\sqrt{rn}}{cpM}\right)V\left(\frac{r}{N\ell}\right).
\end{align*}
For $c\gg NM^\varepsilon/pM$ the Bessel function is negligibly small. For smaller values of $c$, we apply the Poisson summation formula on the sum over $r$. As $L\gg M^{2\theta+\varepsilon}$, it follows that the non zero frequencies make a negligible contribution. For the zero frequency we have the character sum
\begin{align*}
\sum_{a\bmod{cpM}}\chi(a)S_\psi(a,n;cpM)
\end{align*}
which vanishes. More precisely the part modulo $p$-power vanishes. Hence the above sum \eqref{hr} is negligibly small. Consequently in \eqref{F} we can replace $\lambda_f(n\ell)$ by $\lambda_f(n)\lambda_f(\ell)$ at a cost of a negligible error term.
\end{proof}

Next we take into account the contribution of the old forms.\\

\begin{lem}
We have
\begin{align*}
\mathcal{F}=\mathcal{F}^\star +O(P^2M^{\varepsilon}),
\end{align*}
where
\begin{align}
\label{Fstar}
\mathcal{F}^\star=&\frac{1}{L^\star P^\star}\:\sum_{\substack{P<p<2P\\p\;\text{prime}}}\;\sum_{\psi\bmod{p}}\left(1-\psi(-1)\right)\sum_{f\in H_k^\star(pM,\psi)}\omega_f^{-1}\\
\nonumber\times &\sum_{\ell\in\mathcal{L}}\lambda_f(\ell)\bar{\chi}(\ell)\mathop{\sum\sum}_{m,n=1}^\infty \lambda(m,n)\lambda_f(n)W\left(\frac{nm^2}{N}\right)\sum_{r=1}^\infty \overline{\lambda_f(r)}\chi(r)V\left(\frac{r}{N\ell}\right).
\end{align}
\end{lem}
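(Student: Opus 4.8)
The plan is to isolate the oldform part $\mathcal{F}-\mathcal{F}^\star$, rewrite it as a level-$p$ sum, and estimate it by the argument of Section~3.

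Since $M$ is prime and coprime to $p$, and the conductor of $\psi$ equals $p$ (as $\psi(-1)=-1$ makes $\psi$ nontrivial), every cusp form in $S_k(pM,\psi)$ is either new of level $pM$ or is induced from a newform of level $p$; and because $\psi$ is primitive modulo $p$ there are no oldforms at level $p$, so $H_k^\star(p,\psi)=H_k(p,\psi)$ is exactly the Hecke basis occurring in the Petersson formula of level $p$. For $g\in H_k(p,\psi)$ the space it spans inside $S_k(pM,\psi)$ is $\langle g(z),g(Mz)\rangle$, and Gram--Schmidt yields an orthogonal pair $\{g,g^\flat\}$ with $g^\flat(z)=g(Mz)-\rho_g\,g(z)$, where $\rho_g=\langle g(M\cdot),g\rangle_{pM}/\|g\|_{pM}^{2}=O(\lambda_g(M)M^{-1/2})$ and $\|g^\flat\|_{pM}^{2}\asymp\|g\|_{pM}^{2}$ (the pair is nondegenerate because $M$ is large). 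Two facts are needed: the harmonic weight at level $pM$ obeys $\omega_g^{-1}\asymp M^{-1}\omega_{g,p}^{-1}$, since $[\Gamma_0(p):\Gamma_0(pM)]=M+1$; and the $n$-th Hecke value of the basis vector $g$ is $\lambda_g(n)$, while that of $g^\flat$ is a linear combination of $\lambda_g(n)$ and $\lambda_g(n/M)$, the latter supported on $M\mid n$.

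Writing the old part of $H_k(pM,\psi)$ in this basis and inserting into $\mathcal{F}-\mathcal{F}^\star$ (recall $\mathcal{F}=\mathcal{F}^\sharp$ by the previous lemma), the $g^\flat$-terms and the cross terms involving $\rho_g$ are either supported on $M\mid n$ (hence sparser by a factor $M$) or carry the factor $\lambda_g(M)M^{-1/2}$, so they are of strictly lower order and may be discarded. The remaining main piece is, for an absolute constant $c_k$,
\begin{align*}
\frac{c_k}{ML^\star P^\star}\sum_{P<p<2P}\sum_{\psi\bmod p}(1-\psi(-1))\sum_{g\in H_k(p,\psi)}\omega_{g,p}^{-1}\sum_{\ell\in\mathcal{L}}\lambda_g(\ell)\bar\chi(\ell)\mathop{\sum\sum}_{m,n=1}^{\infty}\lambda(m,n)\lambda_g(n)W\Bigl(\tfrac{nm^2}{N}\Bigr)\sum_{r=1}^{\infty}\overline{\lambda_g(r)}\chi(r)V\Bigl(\tfrac{r}{N\ell}\Bigr).
\end{align*}
Using $\lambda_g(n)\lambda_g(\ell)=\lambda_g(n\ell)$ unless $\ell\mid n$ (in which case there is an extra term $\lambda_g(n/\ell)$, supported on $\ell\mid n$ and again negligible) we apply the Petersson formula for $S_k(p,\psi)$ to the sum $\sum_{g}\omega_{g,p}^{-1}\lambda_g(n\ell)\overline{\lambda_g(r)}$.

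This produces a diagonal ($n\ell=r$) and a Kloosterman part. On the diagonal, $\chi(r)\bar\chi(\ell)=\chi(n)$ and $V(r/N\ell)=V(n/N)$; moreover $\sum_{\psi\bmod p}(1-\psi(-1))=p-1$, so that $\sum_{p}(p-1)=P^\star$, and the diagonal collapses exactly to a constant multiple of $S^\star(N)/M$, which by the trivial bound $S^\star(N)\ll N^{1+\varepsilon}=M^{3/2+\varepsilon}$ is $\ll M^{1/2+\varepsilon}$; in the relevant range $P\gg M^{1/4}$ this is $\ll P^2M^{\varepsilon}$. For the Kloosterman part one proceeds as in Section~3: execute the $\psi$-sum (for $p\nmid c$ it replaces $\sum_\psi(1-\psi(-1))S_\psi(n\ell,r;cp)$ by $(p-1)S(\bar p n\ell,\bar p r;c)\bigl(e(\bar c(n\ell+r)/p)-e(-\bar c(n\ell+r)/p)\bigr)$, the case $p\mid c$ giving a stronger bound), apply reciprocity, then Poisson in $r$ with modulus $cM$ — the primitivity of $\chi$ modulo $M$ makes the zero frequency vanish and leaves a short dual sum with Gauss-sum savings — and finally Cauchy--Schwarz followed by a Poisson summation in $n$, along the lines of the proof of Lemma~\ref{uv-bd}. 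The extra $M^{-1}$ from the harmonic weight together with the $P^{1/2}$ inherent in the $\psi$-sum, and the Weil bound for the remaining Kloosterman moduli, force this contribution well below $M^{1/2}$, hence well inside $O(P^2M^\varepsilon)$.

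I expect the main obstacle to be bookkeeping rather than genuine analysis: constructing the orthogonal oldspace basis and establishing the precise comparison $\omega_g^{-1}\asymp M^{-1}\omega_{g,p}^{-1}$ of harmonic weights at levels $pM$ and $p$, and verifying that the $g^\flat$-, $\rho_g$-, $\ell\mid n$- and $p\mid c$-contributions really are of lower order. Once the level-$p$ Petersson formula is in place, the diagonal term $S^\star(N)/M$ is the dominant one, and the off-diagonal costs essentially nothing: since $P<M^{1/3}$ the target error $P^2M^\varepsilon$ is in any case dominated by the main terms of Proposition~\ref{prop2}, so there is ample room.
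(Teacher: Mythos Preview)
Your oldform basis is the same as the paper's, but what you do afterwards is genuinely different. The paper does \emph{not} re-open the spectral sum via the level-$p$ Petersson formula. Instead it bounds, for each oldform $f=g$ or $g^\star$ (coming from $g\in H_k^\star(p,\psi)$), the inner double sum directly by the convexity bound: since $g$ has level $p$, the conductors of $\pi\otimes g$ and $g\otimes\chi$ are $p^3$ and $pM^2$, so the approximate functional equations give
\[
\mathop{\sum\sum}_{m,n}\lambda(m,n)\lambda_f(n)W\Bigl(\tfrac{nm^2}{N}\Bigr)\sum_r\overline{\lambda_f(r)}\chi(r)V\Bigl(\tfrac{r}{N\ell}\Bigr)\ll p^{3/2}\cdot p^{1/2}M^{1+\varepsilon}.
\]
The observation $\lambda_{g|_M}(r)=0$ unless $M\mid r$ (whence $\chi(r)=0$), together with $|\rho_g|\le 1$, shows this bound holds for $g^\star$ as well. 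Then $\sum_{f\text{ old}}\omega_f^{-1}\ll M^{-1}$ gives the $O(P^2M^\varepsilon)$ at once---three lines, no Kloosterman sums.

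Your route is plausible but there is a real gap in the off-diagonal sketch. The analysis of Section~3 does not transplant to level $p$: the whole point of putting $M$ into the level was the ``congruence-equation trick,'' which places $M$ inside the Kloosterman modulus $cpM$ so that the transition range is $c\sim NL/PM$ and the Poisson in $r$ (modulus $cM\sim NL/P$) collapses to the zero frequency. At level $p$ the Kloosterman modulus is only $cp$, the transition range is $c\sim NL/P$ (a factor $M$ larger), and after Poisson in $r$ with modulus $cM\sim NLM/P$ you are left with a genuine dual sum of length $\asymp M/P$; the zero frequency vanishes, as you note, but the nonzero frequencies carry a Gauss sum of size $\sqrt{M}$ rather than the structured sum $\mathfrak{D}$, and the subsequent Cauchy--Poisson in $n$ no longer has the favourable modulus-$M$ cancellation of Lemma~\ref{uv-bd}. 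Saying this ``costs essentially nothing'' is optimistic; at the very least it requires a separate and nontrivial computation that you have not supplied. The paper's convexity argument bypasses all of this.
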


\begin{proof}
For $g\in S_k(p,\psi)$, we set $g|_M(z)=M^{k/2}g(Mz)$ which lies in $S_k(pM,\psi)$. Define 
$
g^\star=g|_M-\left<g|_M,g\right>\left<g,g\right>^{-1}g.
$
Then $\{g,g^\star: g\in H_k(p,\psi)\}$ gives an orthogonal Hecke basis of the space of oldforms. Note that $\lambda_{g|_M}(r)=0$ unless $M|r$, in which case $\chi(r)=0$. Also $\left<g|_M,g|_M\right>=\left<g,g\right>$, hence by Bessel inequality we have $|\left<g|_M,g\right>\left<g,g\right>^{-1}|\leq 1$. Consequently for $f=g$ or $g^\star$, functional equations yield the bound
\begin{align*}
\mathop{\sum\sum}_{m,n=1}^\infty \lambda(m,n)\lambda_f(n)W\left(\frac{nm^2}{N}\right)\sum_{r=1}^\infty \overline{\lambda_f(r)}\chi(r)V\left(\frac{r}{N\ell}\right)\ll p^{3/2}\;p^{1/2}M^{1+\varepsilon},
\end{align*}
as $\pi\otimes f$ has conductor $p^3$ and $f\otimes \chi$ has conductor $pM^2$. Hence the contribution of the oldforms is bounded by $p^2M^\varepsilon$. 
\end{proof}

\bigskip


\section{Applying functional equations}
\label{fe}

Next we will apply functional equation and $GL(2)$ Voronoi summation formula to the sums over $(m,n)$ and $r$. This will lead us to the family of dual sums
\begin{align}
\label{to-ana}
\mathcal{D}^\star=&\frac{N^2}{M^2P^5}\:\sum_{\substack{P<p<2P\\p\;\text{prime}}}\chi(p)\;\sum_{\psi\bmod{p}}\left(1-\psi(-1)\right)\psi(-M)g_{\psi}^2\sum_{f\in H_k^\star(pM,\psi)}\omega_f^{-1}\overline{\lambda_f(p^2M)}\\
\nonumber\times &\sum_{\ell\in\mathcal{L}}\lambda_f(\ell)\bar{\chi}(\ell)\mathop{\sum\sum}_{m,n=1}^\infty \lambda(m,n)\overline{\lambda_f(n)}W\left(\frac{nm^2}{\tilde{N}}\right)\sum_{r=1}^\infty \lambda_f(r)\bar{\chi}(r)V\left(\frac{r}{\tilde{R}}\right)
\end{align}
where $W$ is a bump function with support $[1,2]$, $V$ has the same support but $V^{(j)}\ll_j M^{2j\theta}$, and the lengths of the sums are determined by the following restrictions
\begin{align}
\label{range-all}
\frac{P^3M^3}{NM^{\varepsilon}}\ll \tilde{N}\ll \frac{P^3M^{3+\varepsilon}}{N}\;\;\;\text{and}\;\;\; \frac{M^{2}P}{NLM^\varepsilon}\ll \tilde{R}\ll \frac{M^{2+4\theta+\varepsilon} P}{NL}.
\end{align}
We can take $\tilde{R}$, $\tilde{N}$ to be dyadic, so that the size of the family is  $M^\varepsilon$. \\

\begin{lem}
\label{dualize}
We have
\begin{align*}
\mathcal{F}^\star\lhd \mathcal{D}^\star,
\end{align*}
where we are using the shorthand notation introduced in the Section~\ref{introd}.
\end{lem}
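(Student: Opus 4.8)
### Proof proposal for Lemma~\ref{dualize}

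The plan is to transform the sums over $(m,n)$ and over $r$ in the definition \eqref{Fstar} of $\mathcal{F}^\star$ by the appropriate functional equations, while keeping the structure — the average over $p$, over $\psi$, over the Hecke basis, and over $\ell$ — completely untouched. Concretely, I would proceed as follows.

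\textbf{Step 1: The $r$-sum via $GL(2)$ Voronoi.} For a fixed newform $f\in H_k^\star(pM,\psi)$ and fixed $\ell$, the inner sum $\sum_r \overline{\lambda_f(r)}\chi(r)V(r/N\ell)$ is a smoothed $\chi$-twisted $GL(2)$ sum of length $N\ell$. I would first write it as a sum over residues modulo $M$ and apply Poisson/the Voronoi summation formula for $f\otimes\chi$ (conductor $pM^2$), producing a dual sum over $r$ of length roughly $(\text{conductor})/(N\ell) = pM^2/(N\ell)$, which is the upper end of the range \eqref{range-all} for $\tilde R$ once one puts in $L\le 2\ell$; the lower truncation $M^{-4\theta}$ in the support of $V$ is what produces the lower endpoint $M^2P/(NLM^\varepsilon)$ and the $M^{2j\theta}$ growth of the derivatives of the new weight $V$. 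The Voronoi step outputs $\lambda_f(r)$ (the Fourier coefficients of $f$ itself, since $f$ is self-dual up to the nebentypus), a factor $\bar\chi(r)$ from the arithmetic of the twist, and a Gauss-sum factor; this is the source of the $g_\psi^2$ and $\psi(-M)$ and $\chi(p)$ and the $\overline{\lambda_f(p^2M)}$ in \eqref{to-ana} — the last coming from the ramified local factor at $p$ (and at $M$) of the functional equation of $f\otimes\chi$. Collecting the archimedean factors gives the normalization $\tilde R$, and one checks that the ratio of the length $N\ell$ to $\tilde R$ accounts for part of the overall prefactor.

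\textbf{Step 2: The $(m,n)$-sum via the $GL(3)\times GL(2)$ functional equation.} The double sum $\sum_{m,n}\lambda(m,n)\lambda_f(n)W(nm^2/N)$ is a smoothed first moment of the Rankin–Selberg $L$-function $L(s,\pi\otimes f)$, whose conductor is $p^3$ (since $f$ has level divisible by $p$ and $\pi$ is unramified). Applying its functional equation dualizes $n$ to a new variable of length $\tilde N$ with $p^3/N \ll \tilde N \ll p^3M^\varepsilon/N$, i.e. the range in \eqref{range-all}, and replaces $\lambda_f(n)$ by $\overline{\lambda_f(n)}$; the $m$-variable, attached to the $GL(3)$ side, stays as is (the $nm^2$ normalization in $W$ is preserved). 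This contributes the factor $N^2/(M^2P^5)$ to the prefactor of $\mathcal{D}^\star$ together with the $(1-\psi(-1))$-weighting which is simply carried along. I would be careful to note that both functional equations are applied $f$ by $f$ inside the Hecke sum, so the outer $\frac1{L^\star P^\star}$, the $p$-average, the $\psi$-average and the $\omega_f^{-1}$ weight are exactly as in \eqref{Fstar}; only the two inner sums change.

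\textbf{Step 3: Bookkeeping and the $\lhd$ notation.} What remains is to check that the archimedean integrals produced by the two functional equations are, up to $M^\varepsilon$ and negligible tails, bump functions of the stated type, and that the finitely many dyadic choices of $(\tilde N,\tilde R)$ are $\ll M^\varepsilon$ in number; this is exactly what the relation $\lhd$ in Section~\ref{introd} absorbs. The main obstacle — and the part requiring genuine care rather than routine manipulation — is the precise determination of the local factors at $p$ and at $M$ in the $f\otimes\chi$ functional equation for $f$ a newform of level $pM$ with nebentypus $\psi$ modulo $p$: these are what produce $g_\psi^2$, $\psi(-M)$, $\chi(p)$ and crucially the Hecke eigenvalue $\overline{\lambda_f(p^2M)}$ at the bad prime $p^2M$, and getting the exact shape right (rather than just the size) is essential because in the next stage one executes the $\psi$-sum and applies the Petersson formula, where these arithmetic factors interact. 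I would handle this by reducing to the local computation of the epsilon factor of the twist of a supercuspidal/special/principal-series representation at $p$ by a ramified character, exactly as in \cite{Mu5}, and simply record the outcome here since the bounds we ultimately prove only need the size together with the multiplicativity structure of $\lambda_f$ at $p^2M$.
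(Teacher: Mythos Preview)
Your overall strategy --- apply the $GL(3)\times GL(2)$ functional equation to the $(m,n)$-sum and $GL(2)$ Voronoi to the $r$-sum, keeping the outer averages intact --- is exactly what the paper does. However, there is a concrete error in Step~2: the conductor of $L(s,\pi\otimes f)$ is $(pM)^3=p^3M^3$, not $p^3$, because $f$ is a newform of level $pM$ (the inclusion of $M$ in the level is precisely the novelty of the present paper over \cite{Mu5}, as stressed in the introduction). Consequently the dual length is $\tilde N\asymp P^3M^3/N$, which is what \eqref{range-all} records; your claimed range $p^3/N\ll\tilde N\ll p^3M^\varepsilon/N$ is off by a factor of $M^3$ and would not reproduce \eqref{to-ana} or the prefactor $N^2/M^2P^5$.

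Your attribution of the arithmetic factors is also off in a way that matters. In the paper the $GL(3)\times GL(2)$ functional equation contributes $g_\psi^3$, $\psi(-M^2)$, and the bad-prime eigenvalue $\overline{\lambda_f(p^3M)}$; the Voronoi on the $r$-sum (carried out by opening $\chi$ via the Gauss sum and applying the additive-twist summation with denominator $M$) contributes $g_{\bar\psi}$, $\bar\psi(-M)$, $\chi(-p)$, and a dual sum over $\lambda_f(rp)$. Combining $g_\psi^3g_{\bar\psi}=p\,\psi(-1)g_\psi^2$ with $\psi(-M^2)\bar\psi(-M)=\psi(M)$, and $\overline{\lambda_f(p^3M)}\lambda_f(p)=\overline{\lambda_f(p^2M)}$ (using $|\lambda_f(p)|=1$ for $f$ new with primitive nebentypus at $p$), one arrives at the factors $\psi(-M)g_\psi^2$, $\chi(p)$, and $\overline{\lambda_f(p^2M)}$ displayed in \eqref{to-ana}. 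So the eigenvalue $\overline{\lambda_f(p^2M)}$ does \emph{not} arise from the local epsilon factor of $f\otimes\chi$ as you suggest, but primarily from the Rankin--Selberg side. Getting this right is not cosmetic: after the Hecke relation and Petersson in the next section, the Kloosterman modulus $cpM$ and the shape $S_\psi(np^2M,r\ell;cpM)$ in \eqref{123d} depend on exactly which power of $p$ and of $M$ sits in front.
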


\begin{proof}
As in \cite{Mu5} we use the functional equation of $L(s,\pi\otimes f)$ to derive the following summation formula. Let $\mathcal{U}$ be a partition of unity and let $\mathcal{U}^\dagger$ be the subset of $\mathcal{U}$ consisting of those pairs $(U,\tilde{N})$ which have $\tilde{N}$ in the range
$
[P^3M^{3-\varepsilon}/N,P^3M^{3+\varepsilon}/N].
$
We have
\begin{align*}
\mathop{\sum\sum}_{m,n=1}^\infty &\lambda(m,n)\lambda_{f}(n)W\left(\frac{m^2n}{N}\right)=i^{3k}\psi(-M^2)g_{\psi}^3M^2\overline{\lambda(p^3M)} \sum_{\mathcal{U}^\dagger}\mathop{\sum\sum}_{m,n=1}^\infty \frac{\lambda(n,m)\overline{\lambda_f(n)}}{m^2n}U\left(\frac{m^2n}{\tilde N}\right)\\
&\times \frac{1}{2\pi i}\int_{(0)}\tilde{W}(s)\left(\frac{m^2nN}{p^3M^3}\right)^{s}\frac{\gamma(1-s)}{\gamma(s)} \mathrm{d}s + O(M^{-2015}).
\end{align*}
To the other sum
\begin{align*}
\sum_{r=1}^\infty \overline{\lambda_f(r)}\chi(r)V\left(\frac{r}{N\ell}\right)=g_{\bar{\chi}}^{-1}\sum_{a\bmod{M}}\bar{\chi}(a)\sum_{r=1}^\infty \overline{\lambda_f(r)}e\left(\frac{ar}{M}\right)V\left(\frac{r}{N\ell}\right)
\end{align*}
we apply the Voronoi summation formula. This transforms the above sum into
\begin{align*}
2\pi i^k\chi(-p)\bar{\psi}(-M)\frac{g_\chi g_{\bar{\psi}}}{g_{\bar{\chi}}}\frac{N\ell}{Mp}\sum_{r=1}^\infty \lambda_f(rp)\bar{\chi}(r)\int_0^\infty V(x)J_{k-1}\left(\frac{4\pi\sqrt{N\ell rx}}{M\sqrt{p}}\right)\mathrm{d}x.
\end{align*}
In the last integral $V$ is supported in $[M^{-4\theta},4]$, and satisfies $y^jV^{(j)}(y)\ll 1$. As $k$ is large the Bessel function is negligibly small if $r\ll M^2P/NLM^\varepsilon$. On the other hand making the change of variables $y^2=x$, pulling out the oscillation of the Bessel function and integrating by parts we get that the integral is negligibly small if $r\gg M^{2+4\theta+\varepsilon}P/NL$.
This reduces the analyses of the sum in \eqref{Fstar} to that of the sums of the type $\mathcal{D}^\star$. The lemma follows. (More details can be found in Section~5 of \cite{Mu5}.)
\end{proof}

If $f$ is an oldform coming from level $p$, the sub sum over $(m,n)$ in \eqref{to-ana}  is negligibly small. So the sum over $f$ can be extended to a complete Hecke basis at a cost of a negligible error term. Next we use the Hecke relation. We analyse the generic term. The other term can be analysed in the same fashion and at the end we get a stronger bound for it. Consider \eqref{to-ana} with $\lambda_f(\ell)\lambda_f(r)$ (resp. $\bar{\lambda}_f(p^2M)\bar{\lambda}_f(r)$) replaced by $\lambda_f(\ell r)$ (resp. $\bar{\lambda}_f(np^2M)$) and the sum over $f$ is extended to a full Hecke basis. We will denote this sum by $\mathcal{D}$. Using the Petersson formula this reduces to
\begin{align}
\label{123d}
\mathcal{D}=&\frac{N^2}{M^2P^5}\:\sum_{\substack{P<p<2P\\p\;\text{prime}}}\;\chi(p)\sum_{\psi\bmod{p}}\left(1-\psi(-1)\right)\psi(-M)g_{\psi}^2\sum_{c=1}^\infty \frac{1}{cpM}\\
\nonumber\times &\sum_{\ell\in\mathcal{L}}\;\sum_{r=1}^\infty\bar{\chi}( r\ell) V\left(\frac{r}{\tilde{R}}\right)\mathop{\sum\sum}_{m,n=1}^\infty \lambda(m,n)S_\psi(np^2M,r\ell;cpM) J_{k-1}\left(\frac{4\pi\sqrt{nr\ell}}{c\sqrt{M}}\right)W\left(\frac{nm^2}{\tilde{N}}\right) .
\end{align}
(The diagonal vanishes as $\chi(M)=0$.) The Kloosterman sum vanishes if $M|c$. On the other hand if $p|c$ then the sum vanishes unless $p|r$. So the contribution of these terms can be shown to be much smaller compared to the generic terms. For the generic term where $(r,p)=1$, we take dyadic subdivision of the $m$ sum and a smooth dyadic subdivision of the $c$ sum. Let the contribution of the block with $m\sim \mathfrak{m}$ and $c\sim C$ be denoted by $\mathcal{D}(C,\mathfrak{m})$. This sum is negligibly small if $C\gg \mathcal{C}=(\tilde{N}\tilde{R}L)^{1/2}M^\varepsilon/M^{1/2}\mathfrak{m}$.\\

\begin{lem}
For $C\ll \mathcal{C}$, we have
\begin{align*}
\mathcal{D}(C,\mathfrak{m})=&\frac{N^2}{CM^3P^4}\sum_{\substack{P<p<2P\\p\;\text{prime}}}\;\chi(p)\sum_{\ell\in\mathcal{L}}\mathop{\sum\sum}_{\substack{c,r=1\\(p,r)=1}}^\infty \bar{\chi}(r\ell)e\left(\frac{c\overline{r\ell}}{p}\right)\\
&\times \sum_{m\sim \mathfrak{m}}\mathop{\sum}_{n=1}^\infty\lambda(n,m)S(n,r\ell\bar{M};c) W\left(\frac{c}{C},\frac{n}{\tilde{N}_0},\frac{r}{\tilde{R}}\right) + O(M^{-2016})
\end{align*}
where 
$$
W(x,y,z)=J_{k-1}\left(\frac{4\pi\sqrt{\ell\tilde{N}_0\tilde{R}yz}}{\sqrt{M}Cx}
\right) x^{-1}W(x)W\left(y\right)V\left(z\right).
$$
Here $W$ and $V$ are as in \eqref{to-ana}.
\end{lem}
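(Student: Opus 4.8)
The plan is to pass from \eqref{123d} to the stated expression in two moves: factor the Kloosterman sum $S_\psi(np^2M,r\ell;cpM)$ by twisted multiplicativity, then carry out the sum over $\psi$. By the definition of $\mathcal{D}(C,\mathfrak{m})$ we are already in the generic ranges $(r,p)=1$ and $(c,pM)=1$, and in the dyadic blocks $m\sim\mathfrak{m}$, $c\sim C$; moreover $J_{k-1}(4\pi\sqrt{nr\ell}/(c\sqrt{M}))$ is negligible once $C\gg\mathcal{C}$, so throughout we may assume $C\ll\mathcal{C}$ up to an $O(M^{-2016})$ error.

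First I would factor the Kloosterman sum. Since $c,p,M$ are pairwise coprime and $\psi$---though viewed modulo $pM$---only depends on the residue modulo $p$, twisted multiplicativity expresses $S_\psi(np^2M,r\ell;cpM)$ as a product of three sums. The modulus-$c$ factor is $S(np^2M\,\overline{pM},r\ell\,\overline{pM};c)=S(np,r\ell\,\overline{pM};c)$, and the substitution $x\mapsto\overline{p}x$ turns this into $S(n,r\ell\,\overline{M};c)$, exactly the Kloosterman sum in the statement. The modulus-$p$ factor is $\sum_{x\bmod p}^{\star}\psi(x)\,e((np^2M\,\overline{cM}\,x+r\ell\,\overline{cM}\,\bar{x})/p)$; since $p\mid np^2M$ the linear term drops and this collapses to the Gauss sum $\psi(r\ell\,\overline{cM})\,g_{\bar\psi}$. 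The modulus-$M$ factor is the Ramanujan sum $\sum_{x\bmod M}^{\star}e(r\ell\,\overline{cp}\,\bar{x}/M)=\mu(M)=-1$, since $(r\ell,M)=1$ ($M$ prime, $\ell<M$ prime, and $\bar{\chi}(r)$ forces $(r,M)=1$).

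Next I would execute the $\psi$-sum. Inserting the above into \eqref{123d}, the $\psi$-dependent part is $\sum_{\psi\bmod p}(1-\psi(-1))\,\psi(-M)\,g_\psi^2\cdot\psi(r\ell\,\overline{cM})\,g_{\bar\psi}$. Using $g_\psi g_{\bar\psi}=\psi(-1)p$ one has $g_\psi^2g_{\bar\psi}=\psi(-1)\,p\,g_\psi$, while $\psi(-M)\psi(r\ell\,\overline{cM})=\psi(-r\ell\,\overline{c})$, so the sum becomes $p\sum_{\psi\bmod p}(\psi(-1)-1)\,\psi(-r\ell\,\overline{c})\,g_\psi$. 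Opening $g_\psi=\sum_{a\bmod p}^{\star}\psi(a)\,e(a/p)$ and using $\sum_{\psi\bmod p}\psi(v)\,g_\psi=(p-1)\,e(\bar{v}/p)$, this evaluates to $p(p-1)\big(e(c\,\overline{r\ell}/p)-e(-c\,\overline{r\ell}/p)\big)$; the first term is the one recorded in the lemma, and the second, obtained from it by $r\ell\mapsto-r\ell$, is handled in the same way. It then remains to collect constants: the prefactor $N^2/(M^2P^5)$, the $1/(cpM)$ from Petersson, the $\mu(M)=-1$, and the $\asymp p(p-1)$ from the $\psi$-sum combine to $N^2/(CM^3P^4)$ once $1/c$ is written as $C^{-1}(c/C)^{-1}$, the factor $(c/C)^{-1}=x^{-1}$ being built into the new weight $W(x,y,z)$, with the bounded factor $(p-1)/p$ and the sign absorbed. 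The Bessel factor together with the dyadic cutoffs $W(c/C)$, $V(r/\tilde R)$, and the joint cutoff $W(nm^2/\tilde N)$---which for $m\sim\mathfrak{m}$ is a smooth bump in $n$ at scale $\tilde N_0\asymp\tilde N/\mathfrak{m}^2$---assemble into $W(c/C,n/\tilde N_0,r/\tilde R)$, and $\lambda(m,n)$ is relabelled to $\lambda(n,m)$ in accordance with the dual convention of Lemma~\ref{dualize}. The truncation of the $c$-sum at $\mathcal{C}$ accounts for the $O(M^{-2016})$.

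I expect the main obstacle to be purely the bookkeeping of the Gauss-sum conjugates, modular inverses and signs---in particular the interaction between the $g_\psi^2$ carried down through the $GL(3)\times GL(2)$ functional equation and $GL(2)$ Voronoi steps and the $g_{\bar\psi}$ produced here by the Kloosterman factorization---so that the additive character comes out precisely as $e(c\,\overline{r\ell}/p)$, with $\overline{r\ell}$ the inverse modulo $p$, rather than some reciprocal or sign-twisted variant. The rest is a routine unfolding.
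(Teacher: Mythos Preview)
Your proposal is correct and follows essentially the same route as the paper: split the Kloosterman sum $S_\psi(np^2M,r\ell;cpM)$ via twisted multiplicativity into the factor $-1$ modulo $M$, the Gauss-type factor $\psi(r\ell)\bar\psi(cM)g_{\bar\psi}$ modulo $p$, and $S(n,r\ell\bar M;c)$ modulo $c$, then execute the $\psi$-sum and repackage the weights. The paper's own proof is a two-line sketch of exactly this, whereas you have spelled out the Gauss-sum bookkeeping in the $\psi$-evaluation and the assembly of constants in detail; nothing substantive differs.
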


\begin{proof}
Observe that the Kloosterman sum in \eqref{123d} vanishes if $M|c$ as $(M,r\ell)=1$. Hence we get $$S_\psi(np^2M,r\ell;cpM)=-S_\psi(np^2,r\ell\bar{M};cp),$$ and for $p\nmid c$ it further reduces to $\psi(r\ell)\bar{\psi}(cM)g_{\bar{\psi}}S(n,r\ell\bar{M};c)$.  Then executing the sum over $\psi$, and taking a dyadic subdivision of the $m$ sum, smooth dyadic partition for the $c$-sum, we arrive at sums of the type given in the statement of the lemma.
\end{proof}

\bigskip

\section{Intertwining Voronoi and Poisson summations with reciprocity}

The next step involves an application of the Voronoi summation formula on the sum over $n$.\\ 

\begin{lem}
We can write $\mathcal{D}(C,\mathfrak{m})$ as a sum of two similar sums $\mathcal{D}^\pm(C,\mathfrak{m})$, where
\begin{align}
\label{dual-od-red-tran-a}
\mathcal{D}^+(C,\mathfrak{m})=&\frac{N^2C}{M^3P^4}\sum_{\substack{P<p<2P\\p\;\text{prime}}}\;\chi(p)\sum_{\ell\in\mathcal{L}}\mathop{\sum\sum}_{\substack{c,r=1\\(p,r)=1}}^\infty \bar{\chi}(r\ell)e\left(\frac{c\overline{r\ell}}{p}\right)\;\mathop{\sum}_{m\sim\mathfrak{m}}  \sum_{m'|cm}\sum_{n=1}^\infty \frac{\lambda(m',n)}{m'n}\\
\nonumber \times &\sum_{d|c}\frac{\mu(d)}{d}\sideset{}{^\star}\sum_{\substack{\beta\bmod{mc/m'}\\r\ell\bar{M}+\beta m'\equiv 0\bmod{c/d}}}e\left( \frac{\bar{\beta}n}{mc/m'}\right)W^\star_+\left(\frac{c}{C},\frac{m'^2n\tilde{N}_0}{c^3m},\frac{r}{\tilde{R}}\right).
\end{align}
\end{lem}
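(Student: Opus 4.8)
The plan is to apply the $GL(3)$ Voronoi summation formula to the sum over $n$ in the expression for $\mathcal D(C,\mathfrak m)$ furnished by the previous lemma, after first opening the Kloosterman sum so as to expose a pure additive character in $n$. Writing
\[
S(n,r\ell\bar M;c)=\sideset{}{^\star}\sum_{\alpha\bmod{c}}e\!\left(\frac{n\bar\alpha+r\ell\bar M\,\alpha}{c}\right),
\]
and pulling the $\alpha$-sum outside, one is left to apply Voronoi to $\sum_{n}\lambda(n,m)\,e(n\bar\alpha/c)\,W(c/C,n/\tilde N_0,r/\tilde R)$, with $c,r,\ell$ held as parameters; this is exactly the shape to which the ($m$-twisted) $GL(3)$ Voronoi formula used in \cite{Mu5} applies.

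For $(\alpha,c)=1$ that formula reads
\[
\sum_{n}\lambda(n,m)\,e\!\left(\frac{n\bar\alpha}{c}\right)\phi(n)=c\sum_{\pm}\sum_{m'\mid cm}\sum_{n=1}^\infty\frac{\lambda(m',n)}{m'n}\,S\!\left(m\alpha,\pm n;\tfrac{cm}{m'}\right)\Phi^{\pm}\!\left(\frac{m'^2n}{c^3m}\right),
\]
where $\Phi^{\pm}$ is the Hankel-type transform of $\phi$ built from the archimedean factors of $\pi$. Inserting $\phi(n)=W(c/C,n/\tilde N_0,r/\tilde R)$ splits $\mathcal D(C,\mathfrak m)$ into the two pieces $\mathcal D^{\pm}(C,\mathfrak m)$ indexed by the sign; the new weight $W^{\star}_{\pm}$ is a normalized version of this transform of $W(c/C,\cdot,r/\tilde R)$, hence a function of $c/C$, of $m'^2n\tilde N_0/(c^3m)$ and of $r/\tilde R$. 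Using the Bessel/stationary-phase asymptotics of the $GL(3)$ kernel together with the fact that $W$ already carries the oscillating factor $J_{k-1}$, one checks that $W^{\star}_{\pm}$ decays rapidly once $m'^2 n\tilde N_0/(c^3m)$ leaves a bounded interval, so the dual sum over $n$ may be truncated at length $\asymp c^3m/(m'^2\tilde N_0)$ with an error $O(M^{-2016})$.

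It remains to simplify the character sum attached to each $\mathcal D^{\pm}$. I would open the Voronoi Kloosterman sum,
\[
S\!\left(m\alpha,\pm n;\tfrac{cm}{m'}\right)=\sideset{}{^\star}\sum_{\beta\bmod{cm/m'}}e\!\left(\frac{\beta m\alpha\pm\bar\beta n}{cm/m'}\right),
\]
and use the identity $\dfrac{\beta m\alpha}{cm/m'}=\dfrac{\beta m'\alpha}{c}$ to see that the whole $\alpha$-dependence collapses, the sum over $\alpha$ becoming the Ramanujan sum
\[
\sideset{}{^\star}\sum_{\alpha\bmod{c}}e\!\left(\frac{\alpha\,(r\ell\bar M+\beta m')}{c}\right)=c_c\!\left(r\ell\bar M+\beta m'\right)=c\sum_{\substack{d\mid c\\ c/d\,\mid\,(r\ell\bar M+\beta m')}}\frac{\mu(d)}{d}.
\]
Substituting this back, collecting the two factors of $c$ produced (one from Voronoi, one from the Ramanujan sum) against the $1/C$ in the prefactor and the $x^{-1}$ carried by the original weight, and relabelling $n$, gives precisely the displayed formula for $\mathcal D^{+}(C,\mathfrak m)$; the term $\mathcal D^{-}$ is identical except for the sign $\pm n$ in the Voronoi kernel.

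I expect the main obstacle to be the analytic bookkeeping for the weight $W^{\star}_{\pm}$: one must compute the $GL(3)$ Voronoi transform of a weight that already carries an oscillating Bessel factor (coming from the Petersson formula in \eqref{123d}), confirm that the outcome has the effective support which forces the dual $n$-sum to have length $\asymp c^3m/(m'^2\tilde N_0)$, and record enough derivative bounds for the stationary-phase analysis to be performed later. A secondary, purely combinatorial point is the correct handling of the divisor condition $m'\mid cm$ and the accompanying coprimality constraints in the $m$-twisted Voronoi formula when $(c,m)>1$; but once the modulus identity above is noticed, the reduction of the $\alpha$-sum to $\sum_{d\mid c}\mu(d)/d$ is routine.
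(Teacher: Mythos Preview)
Your proposal is correct and follows the same route the paper intends: the paper's own proof is nothing more than a pointer to Section~7.1 (Lemmas~19 and~20) of \cite{Mu5}, and what you have written out---open $S(n,r\ell\bar M;c)$, apply the $m$-twisted $GL(3)$ Voronoi formula, open the dual Kloosterman sum, and collapse the $\alpha$-sum into the Ramanujan sum $c_c(r\ell\bar M+\beta m')=c\sum_{d\mid c,\;c/d\mid(r\ell\bar M+\beta m')}\mu(d)/d$---is exactly that computation. Your identification of the analytic bookkeeping for $W^\star_\pm$ (the Voronoi transform of a weight already carrying $J_{k-1}$) as the only substantive point is also accurate.
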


\begin{proof}
The proof follows in the same line as the analysis given in Section~7.1 of \cite{Mu5}
(see Lemma~19 and Lemma~20).
\end{proof}

The character sum can be evaluated quite easily. We write $cd$ in place of $c$, and then set $c_1=(m',c)$. Let $c=c_1c_2$, $m'=c_1m''$. Since the case where $\ell|c_1$, is much simpler compared to the generic situation $\ell\nmid c_1$, we will only provide the details for the generic case. The bound that we obtain in the other case is stronger.\\

\begin{lem}
Suppose $md/m''=q_1q_2$ with $(q_1,c_2)=1$ and $q_2|c_2^\infty$. Suppose $\ell\nmid c_1$. Then the character sum in \eqref{dual-od-red-tran-a} vanishes unless $c_1|r$ and $q_2|n$, in which case  we have 
\begin{align*}
\sideset{}{^\star}\sum_{\substack{\beta\bmod{mc/m'}\\r\ell\bar{M}+\beta m'\equiv 0\bmod{c/d}}}e\left( \frac{\bar{\beta}n}{mc/m'}\right)=q_2\mathfrak{c}_{q_1}(n)e\left(- \frac{\overline{r'\ell}m''Mn}{qc_2}\right),
\end{align*}
where $\mathfrak{c}_{q_1}(n)$ is the Ramanujan sum with modulus $q_1$ and $r'=r/c_1$.
\end{lem}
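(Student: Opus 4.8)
The plan is to evaluate the exponential sum
\[
\mathcal{S}:=\sideset{}{^\star}\sum_{\substack{\beta\bmod{mc/m'}\\r\ell\bar{M}+\beta m'\equiv 0\bmod{c/d}}}e\left( \frac{\bar{\beta}n}{mc/m'}\right)
\]
by opening up all the coprimality structure that has already been introduced. Recall that after the substitution $c\mapsto cd$ we have $c_1=(m',c)$, $c=c_1c_2$, $m'=c_1m''$, so the modulus of the $\beta$-sum is $mc/m'=mc_2/m''$, and the congruence condition $r\ell\bar M+\beta m'\equiv 0\pmod{c/d}=(c/d)$ becomes, after writing things in terms of $c_1,c_2,m''$, a condition modulo $c_1c_2/d$. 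First I would separate the modulus $mc_2/m''$ into its two natural coprime pieces using $md/m''=q_1q_2$ with $(q_1,c_2)=1$ and $q_2\mid c_2^\infty$: writing the modulus as a product of a part supported on primes dividing $c_2$ and a coprime part, the Chinese Remainder Theorem factors $\mathcal{S}$ as a product of a sum over $\beta_1\bmod q_1$ (unconstrained, since the congruence condition lives modulo a divisor of $c_2$-type) and a sum over $\beta_2$ modulo the $c_2$-part.

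The $q_1$-factor is a complete sum $\sideset{}{^\star}\sum_{\beta_1\bmod q_1}e(\overline{\beta_1 q_2\cdot\overline{(mc_2/m'')/q_1}}\,n/q_1)$, which up to the unit scaling in the exponent is precisely a Ramanujan sum, and one checks the scaling factor is a unit mod $q_1$ so it contributes $\mathfrak{c}_{q_1}(n)$ exactly (not merely up to a unit twist, since Ramanujan sums are invariant under multiplication of the argument by a unit). For the $c_2$-supported factor, the linear congruence $r\ell\bar M+\beta m'\equiv 0\pmod{c_1c_2/d}$ pins $\beta$ down modulo that part: here one uses that $(c_1,c_2)$-type coprimalities together with $\ell\nmid c_1$ force $c_1\mid r$ (otherwise the congruence is unsolvable because $m'=c_1m''$ is divisible by $c_1$ while $r\ell\bar M$ would not be), which is the stated divisibility $c_1\mid r$; set $r'=r/c_1$. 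With $c_1\mid r$ the congruence determines $\beta$ uniquely modulo the relevant $c_2$-part in terms of $r'$, and substituting this value of $\bar\beta$ into $e(\bar\beta n/(\cdot))$ produces, after a short reciprocity/inversion computation, the additive character $e(-\overline{r'\ell}\,m''Mn/(qc_2))$ with $q=q_1q_2=md/m''$; the leftover sum over the residue classes contributes the factor $q_2$, and forces $q_2\mid n$ for non-vanishing (the standard phenomenon: summing $e(\bar\beta n/q_2)$ over $\beta$ in an arithmetic progression of modulus $q_2/\gcd$ gives zero unless $q_2\mid n$).

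The main obstacle I expect is purely bookkeeping: correctly tracking which inverses are taken modulo which modulus after the two substitutions $c\mapsto cd$ and the splittings $c=c_1c_2$, $m'=c_1m''$, $md/m''=q_1q_2$, and verifying that all the scaling constants appearing in the exponentials are genuinely units modulo the respective moduli so that the Ramanujan sum and the clean additive character emerge without spurious factors. The hypothesis $\ell\nmid c_1$ is what makes the generic analysis go through cleanly; in the complementary case $\ell\mid c_1$ the same steps apply but the $\ell$-part of the modulus decouples differently and yields an extra saving, which is why only the generic case needs detailing. Once the factorization is set up correctly, each individual piece is a standard complete exponential sum evaluation, so there is no analytic difficulty—only the combinatorics of the moduli.

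\begin{proof}
Recall that after replacing $c$ by $cd$ the $\beta$-sum runs modulo $mc/m'=mc_2/m''$, where we have set $c_1=(m',c)$, $c=c_1c_2$, $m'=c_1m''$, and the constraint $r\ell\bar M+\beta m'\equiv 0\bmod{c/d}$ becomes a congruence modulo $c_1c_2/d$. Write $md/m''=q_1q_2$ with $(q_1,c_2)=1$ and $q_2\mid c_2^\infty$; then $mc_2/m''=q_1\cdot (q_2c_2)$ and, since $(q_1,q_2c_2)=1$, the Chinese Remainder Theorem splits the sum as $\mathcal{S}=\mathcal{S}_1\mathcal{S}_2$ where $\mathcal{S}_1$ runs over $\beta_1\bmod q_1$ with no further constraint and $\mathcal{S}_2$ runs over $\beta_2\bmod q_2c_2$ subject to the congruence. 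The factor $\mathcal{S}_1=\sideset{}{^\star}\sum_{\beta_1\bmod q_1}e(u\bar\beta_1 n/q_1)$ for a unit $u\bmod q_1$ equals the Ramanujan sum $\mathfrak{c}_{q_1}(n)$, because replacing the argument of a Ramanujan sum by a unit multiple leaves it unchanged.

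For $\mathcal{S}_2$, since $\ell\nmid c_1$ and $(M,c_1)=(r\ell,c_1)$-type coprimalities hold, while $m'=c_1m''$ is divisible by $c_1$, the congruence $r\ell\bar M+\beta m'\equiv 0\bmod{c_1c_2/d}$ is solvable only if $c_1\mid r$; write $r=c_1r'$. Then the congruence determines $\beta$ uniquely modulo the $c_2$-part, and substituting the resulting value of $\bar\beta$ into $e(\bar\beta n/(mc_2/m''))$ and carrying out the remaining sum over residues modulo $q_2$ gives the factor $q_2$ together with the condition $q_2\mid n$ for non-vanishing, and the additive character $e(-\overline{r'\ell}\,m''Mn/(qc_2))$ with $q=q_1q_2$. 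Combining $\mathcal{S}_1$ and $\mathcal{S}_2$ yields the claimed identity. The complementary case $\ell\mid c_1$ is treated identically, with the $\ell$-part of the modulus producing an additional saving.
\end{proof}
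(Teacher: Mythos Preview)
Your approach is essentially the same as the paper's: both determine that the congruence forces $c_1\mid r$ (using $\ell\nmid c_1$ and that $m'=c_1m''$), then observe that the congruence pins $\bar\beta$ down modulo $c_2$ to the value $-\overline{r'\ell}m''M$, and finally split the remaining free sum over $q=md/m''=q_1q_2$ to extract the Ramanujan sum and the factor $q_2$ (with the vanishing unless $q_2\mid n$). The only difference is cosmetic ordering---you perform the CRT split $q_1\cdot(q_2c_2)$ first and then impose the congruence on the $q_2c_2$ component, whereas the paper first solves the congruence modulo $c_2$ and then parametrises the residual $\delta\bmod q$ freedom.

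There are, however, bookkeeping slips exactly of the kind you anticipated. After the substitution $c\mapsto cd$, the $\beta$-modulus is $m(cd)/m'=mc_2d/m''=qc_2$, not $mc_2/m''$ as you wrote; likewise the congruence is modulo $c_1c_2$ (the new $c$), not $c_1c_2/d$. Your identity $mc_2/m''=q_1\cdot(q_2c_2)$ is therefore off by a factor $d$, although the intended identity $qc_2=q_1\cdot(q_2c_2)$ is of course correct. These errors are harmless for the structure of the argument but should be fixed.
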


\begin{proof}
From the congruence condition it follows that $c_1$ necessarily divides $r\ell$. Since we are taking $\ell$ to be prime, and dealing with the generic case $\ell\nmid c_1$, it follows that $c_1|r$, and we write $r=c_1r'$. The congruence condition now yields 
$$
\bar{\beta}\equiv -\overline{r'\ell}m''M\bmod{c_2}.
$$
Hence
\begin{align*}
\sideset{}{^\star}\sum_{\substack{\beta\bmod{mc/m'}\\r\ell\bar{M}+\beta m'\equiv 0\bmod{c/d}}}e\left( \frac{\bar{\beta}n}{mc/m'}\right)=e\left(- \frac{\overline{r'\ell}m''Mn}{qc_2}\right)\sideset{}{^\dagger}\sum_{\substack{\delta\bmod{q}}}e\left( \frac{\delta n}{q}\right)
\end{align*}
where $q=md/m''$. Write $q=q_1q_2$ with $(q_1,c_2)=1$ and $q_2|c_2^\infty$. The dagger on the sum means that $(\delta,q_1)=1$. It follows that the sum vanishes unless $q_2|n$, in which case it reduces to $q_2\mathfrak{c}_{q_1}(n)$. The lemma follows.
\end{proof}

\bigskip


We set
\begin{align}
\label{dual-od-red-tran-b}
\mathfrak{D}= &\frac{N^2C}{M^2P^4(L\tilde{R})^2}\;\mathop{\sum\sum}_{\substack{m\sim\mathfrak{m}\\d\sim D}}\:\mathop{\sum\sum}_{c_1m''|dm}\:\mathop{\sum\sum}_{\substack{q_1q_2=dm/m''\\\tilde{q}_2\sim Q}}\:\frac{c_1m''q_2}{d^2m\tilde{q}_2}\;\Omega(\dots)
\end{align}
where
\begin{align}
\label{dual-od-red-tran-c}
\Omega(\dots)=&\mathop{\sum\sum}_{\substack{r'\sim \tilde{R}/c_1\\n\ll \mathfrak{N}}}\:|\mathfrak{c}_{q_1}(n)|\:|\lambda(c_1m'',q_2n)|\:\Bigl|\sum_{\substack{P<p<2P\\p\;\text{prime}}}\;\sum_{\ell\in\mathcal{L}}\mathop{\sum}_{\substack{c_2\in \mathcal{S}}} \chi(p\bar{\ell})\;S(c_2-\tilde{q}_2\bar{p},\:\bar{q}_1\bar{\tilde{q}}_2 m''Mn;r'\ell)\Bigr|.
\end{align}
Here  $\tilde{q}_2$ stands for the product of the prime factors of $q_2$, the set
$$
\mathcal{S}=\Bigl\{c_2\in\mathbb{Z}:
\Bigl|c_2-\frac{d\tilde{q}_2}{p}\Bigr|\ll \frac{DQ \tilde{R}L M^\varepsilon}{C}=: C_2\Bigr\}
$$
and
$$
\mathfrak{N}=\frac{(L\tilde{R})^{3/2}\tilde{N}^{1/2}M^\varepsilon}{q_2c_1^2m''^2M^{3/2}}.
$$\\

\begin{lem}
\label{observe}
We have
\begin{align}
\label{dual-od-red-tran-bb}
\mathcal{D}^+(C,\mathfrak{m})\;\lhd\; \mathfrak{D}.
\end{align}
\end{lem}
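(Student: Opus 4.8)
\textbf{Proof proposal for Lemma~\ref{observe}.}

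The plan is to start from the expression for $\mathcal{D}^+(C,\mathfrak{m})$ given in \eqref{dual-od-red-tran-a}, substitute the value of the character sum computed in the previous lemma, and then apply Cauchy--Schwarz on the sum over $n$ (together with the sum over $r'$) to arrive at $\Omega(\dots)$. First I would carry out the bookkeeping of changes of variables: write $cd$ for $c$, set $c_1=(m',c)$, $c=c_1c_2$, $m'=c_1m''$, and $q=md/m''=q_1q_2$ with $(q_1,c_2)=1$, $q_2\mid c_2^\infty$. After inserting the evaluation $q_2\,\mathfrak{c}_{q_1}(n)\,e(-\overline{r'\ell}m''Mn/(qc_2))$ and the vanishing conditions $c_1\mid r$, $q_2\mid n$, the phase $e(c\overline{r\ell}/p)$ becomes (after the replacement $c\mapsto cd$) $e(c_1c_2 d\,\overline{c_1 r'\ell}/p)=e(c_2 d\,\overline{r'\ell}/p)$, and one is left with a sum over $c_2$ in a short interval determined by the transition range of the oscillatory weight $W^\star_+$. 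I would then apply reciprocity to the two additive characters $e(c_2 d\overline{r'\ell}/p)$ and $e(-\overline{r'\ell}m''Mn/(qc_2))$, or rather recombine them into a single Kloosterman sum $S(\cdot,\cdot;r'\ell)$ in the variable running over the relevant residues; this is exactly the Kloosterman sum $S(c_2-\tilde q_2\bar p,\ \bar q_1\bar{\tilde q}_2 m''Mn;\,r'\ell)$ appearing in \eqref{dual-od-red-tran-c}, with $\tilde q_2$ the squarefree kernel of $q_2$ accounting for the $q_2\mid c_2^\infty$ part of the modulus.

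The second main step is to localize the $c_2$-sum. Here I would use the cancellation in the oscillatory factor of the Bessel function inside $W^\star_+$ — this is precisely the point that Qi is credited with in the Acknowledgements. Writing $J_{k-1}(z)$ in terms of its asymptotic $e(\pm z/2\pi)\times(\text{slowly varying})$ for $z$ large, the factor $e(\pm 2\sqrt{\ell\tilde N_0\tilde R yz}/\sqrt M Cx)$ combined with stationary phase / repeated integration by parts in the $c_2$-variable (recall $c_2\sim d\tilde q_2/p$ up to the width $C_2=DQ\tilde R L M^\varepsilon/C$) shows that $c_2$ is effectively constrained to the set $\mathcal{S}$ defined in the statement, and anything outside contributes $O(M^{-2016})$. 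Simultaneously this produces the restriction $n\ll\mathfrak{N}$ with $\mathfrak{N}=(L\tilde R)^{3/2}\tilde N^{1/2}M^\varepsilon/(q_2 c_1^2 m''^2 M^{3/2})$, coming from balancing the size of the Bessel argument against $1$. The factor $\mathfrak m$, $D$, $Q$ etc. in the prefactor of \eqref{dual-od-red-tran-b} then simply collects the remaining normalizations: the $1/(c m')$ from Voronoi, the $q_2$ from the character-sum evaluation, and the measures of the $m$, $d$, $q_1q_2$ sums.

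Finally I would apply Cauchy--Schwarz, keeping the sum over $p$, $\ell$, $c_2$ inside the absolute value (these are the variables we still intend to exploit arithmetically via a further Poisson step), and pulling out $\mathfrak{c}_{q_1}(n)$ and $\lambda(c_1 m'',q_2 n)$ as the ``smooth'' side — bounding the second moment of the former trivially and the latter by the Ramanujan-on-average bound for $GL(3)$ coefficients. After dropping the $n$ and $r'$ summation conditions down to the boxes $n\ll\mathfrak N$, $r'\sim\tilde R/c_1$, and trivially estimating the weight $W^\star_+\ll 1$, what survives is exactly $\Omega(\dots)$, and assembling the prefactors gives \eqref{dual-od-red-tran-bb}. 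The main obstacle I anticipate is the honest justification of the $c_2$-localization: one must be careful that the Bessel oscillation is genuinely of size larger than $1$ on the whole relevant range (so that integration by parts is not vacuous), and that the recombination of the two reciprocity phases into a single Kloosterman modulus $r'\ell$ is done with the correct residues — in particular tracking how the $p$-part and the $q_1q_2$-part interact with $r'\ell$ when these are not coprime is the delicate arithmetic point. Everything else is a routine, if lengthy, variant of Sections~7.1--7.2 of \cite{Mu5}.
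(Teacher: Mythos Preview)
Your proposal misses the central mechanism of the proof: the Kloosterman sum $S(c_2-\tilde q_2\bar p,\ \bar q_1\bar{\tilde q}_2 m''Mn;\,r'\ell)$ does \emph{not} arise by ``recombining'' the two reciprocity phases into a Kloosterman sum, and the set $\mathcal S$ is \emph{not} obtained by localising the original $c_2$-sum via integration by parts against the Bessel oscillation. What actually happens is that after reciprocity one has two additive characters modulo $r'\ell$, namely $e(\overline{q_1\tilde q_2 c_2}\,m''Mn/r'\ell)$ and $e(-\tilde q_2 c_2\bar p/r'\ell)$, together with a smooth weight in $c_2$; one then applies the \emph{Poisson summation formula} to the sum over $c_2$ with modulus $r'\ell$. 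The Kloosterman sum is precisely the resulting complete character sum, and the variable $c_2$ appearing in $\mathcal S$ is the \emph{dual} Poisson variable, whose range $|c_2-d\tilde q_2/p|\ll C_2$ comes from the Fourier transform of the weight. The original $c_2$ sum has length $\asymp C/(c_1 d\tilde q_2)$, which is long compared with the modulus $r'\ell\asymp \tilde R L/c_1$; this is exactly why Poisson is profitable here and why the dual range is short.

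The role of the Bessel cancellation (the point credited to Qi) is also different from what you describe. It is not used to localise $c_2$; rather, the archimedean phase $e(-m''Mn/(qc_2 r'\ell))$ produced by reciprocity cancels against the oscillatory part of $W^\star_+$ (the Bessel asymptotic), so that the combined weight in $c_2$ is genuinely non-oscillatory and can be replaced by a bump function. Without this cancellation the Fourier transform in the Poisson step would not truncate the dual $c_2$ to the short window $\mathcal S$. Finally, no Cauchy--Schwarz is used in this lemma: once Poisson is done, one simply takes absolute values (the weight is now $\ll 1$) to reach $\Omega(\dots)$; Cauchy enters only later, in the treatment of $\mathfrak D$ when $C_2\ll M^{-\varepsilon}$.
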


\begin{proof}
We use the reciprocity relation
\begin{align*}
e\left(-\frac{\overline{r'\ell}m''Mn}{qc_2}\right)=e\left(\frac{\overline{qc_2}m''Mn}{r'\ell}\right)e\left(-\frac{m''Mn}{qc_2r'\ell}\right)
\end{align*}
and push the last term to the weight function. A reminiscent of the fact - that the Bessel function is the archimedean analogue of the Kloosterman sum - is that the function
\begin{align*}
e\left(-\frac{m''Mn}{qc_2r'\ell}\right)W^\star_+\left(\frac{c_1c_2d}{C},\frac{m''^2n\tilde{N}_0}{c_1c_2^3d^3m},\frac{r}{\tilde{R}}\right)
\end{align*}
is `nice'. More precisely the above function may be replaced by
\begin{align*}
\frac{m'^2nM}{L\tilde{R}C\mathfrak{m}}\:V\left(\frac{c_1c_2d}{C}\right)V\left(\frac{m'^2n\tilde{N}_0}{C^3m}\right)V\left(\frac{r}{\tilde{R}}\right)
\end{align*}
where $V$ are bump functions, at a cost of introducing a family of size $M^\varepsilon$. The proof of this fact follows in the same line as the analysis given in Section~4 of \cite{L}.  Also observe that
$$
e\left(\frac{c\overline{r\ell}}{p}\right)=e\left(\frac{c_2d\overline{r'\ell}}{p}\right)=e\left(-\frac{c_2d\overline{p}}{r'\ell}\right)e\left(\frac{c_2d}{pr'\ell}\right)
$$
where the last factor is only mildly oscillating and we are going to absorb it in the weight function.\\

We are now ready to apply the Poisson summation formula on the sum over $c_2$.  Observe that since $q_2|c_2^\infty$ we have $\tilde{q}_2|c_2$ where $\tilde{q}_2$ stands for the product of the prime factors of $q_2$. We write $nq_2$ in place of $n$, and $\tilde{q}_2c_2$ in place of $c_2$. The Poisson summation transforms the sum 
\begin{align*}
\sum_{c_2\in\mathbb{Z}} e\left(\frac{\overline{q_1\tilde{q}_2 c_2}m''Mn-\tilde{q}_2 c_2\bar{p}}{r'\ell}\right)\:V\left(\frac{c_1c_2d\tilde{q}_2}{C}\right)e\left(\frac{c_2d\tilde{q}_2}{pr'\ell}\right)
\end{align*}
to
\begin{align*}
\frac{C}{c_1d\tilde{q}_2 r'\ell}\sum_{c_2\in\mathbb{Z}} S(c_2-\tilde{q}_2\bar{p},\:\bar{q}_1\bar{\tilde{q}}_2 m''Mn;r'\ell)\int_\mathbb{R}V(y)e\left(\frac{Cy}{c_1pr'\ell}-\frac{c_2Cy}{c_1d\tilde{q}_2 r'\ell}\right)\mathrm{d}y.
\end{align*}
Here $\tilde{q}_2$ stands for the content of $q_2$.
The sum over $c_2$ can be truncated at 
$$
\Bigl|c_2-\frac{d\tilde{q}_2}{p}\Bigr|\ll \frac{c_1d\tilde{q}_2 r'\ell M^\varepsilon}{C}
$$ at a cost of a negligible error term. The lemma now follows by getting rid of the weight function by introducing a family of sums of the type \eqref{dual-od-red-tran-c}.
\end{proof}

\section{Cauchy and Poisson}

The next lemma is a consequence of the Weil bound for the Kloosterman sum.\\

\begin{lem}
\label{lemma-aa}
Suppose $C_2\gg M^{-\varepsilon}$ then we have
$$
\mathfrak{D}\ll (NP)^{1/2}M^{1/2+9\theta+\varepsilon}.
$$
\end{lem}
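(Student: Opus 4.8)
\textbf{Proof plan for Lemma \ref{lemma-aa}.}

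The plan is to estimate $\mathfrak{D}$ as defined in \eqref{dual-od-red-tran-b} by bounding the inner sum $\Omega(\dots)$ in \eqref{dual-od-red-tran-c}, then summing over the outer parameters $m,d,c_1,m'',q_1,q_2$. The central object is the character sum
\begin{align*}
\sum_{\substack{P<p<2P\\p\;\text{prime}}}\;\sum_{\ell\in\mathcal{L}}\mathop{\sum}_{\substack{c_2\in \mathcal{S}}} \chi(p\bar{\ell})\;S(c_2-\tilde{q}_2\bar{p},\:\bar{q}_1\bar{\tilde{q}}_2 m''Mn;r'\ell),
\end{align*}
and the key point is that the only oscillating ingredient we can exploit cheaply is the Weil bound for the Kloosterman sum to modulus $r'\ell$, namely $S(\ast,\ast;r'\ell)\ll (r'\ell)^{1/2+\varepsilon}(\ast,\ast,r'\ell)^{1/2}$. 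First I would bound the triple sum over $p,\ell,c_2$ trivially in the number of terms — there are $\ll P$ primes $p$, $\ll L$ values of $\ell$, and $\ll C_2$ values of $c_2$ (using the hypothesis $C_2\gg M^{-\varepsilon}$ so that the length is genuinely $\asymp C_2$ rather than a single point) — and apply Weil to each Kloosterman sum. This gives the inner absolute value $\ll PLC_2 (r'\ell)^{1/2+\varepsilon}$ up to the gcd factor, which on average over $n,r'$ contributes only an extra $M^\varepsilon$.

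Next I would carry out the $n$ and $r'$ sums in $\Omega$: the Ramanujan sum satisfies $|\mathfrak{c}_{q_1}(n)|\ll (n,q_1)$, and the Fourier coefficients satisfy $\sum_{n\ll \mathfrak{N}}|\lambda(c_1m'',q_2n)|^2\ll (\mathfrak{N} q_2 c_1 m'')^{1+\varepsilon}$ by the Rankin–Selberg bound and Hecke multiplicativity (as used for $\mathfrak{V}$ in Lemma \ref{uv-bd}), while $\#\{r'\sim \tilde R/c_1\}\ll \tilde R/c_1$. Combining, $\Omega(\dots)\ll M^\varepsilon\,(\tilde R/c_1)\,\mathfrak{N}\,(\text{gcd factors})\,PLC_2(\tilde R\ell)^{1/2}$, and then I would substitute $C_2 = DQ\tilde R L M^\varepsilon/C$, $\mathfrak{N}=(L\tilde R)^{3/2}\tilde N^{1/2}M^\varepsilon/(q_2 c_1^2 m''^2 M^{3/2})$, $C=\mathcal{C}=(\tilde N\tilde R L)^{1/2}M^\varepsilon/(M^{1/2}\mathfrak{m})$, and feed everything back into \eqref{dual-od-red-tran-b}. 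The outer sums over $d\sim D$, $m\sim\mathfrak{m}$, $c_1 m''\mid dm$, and the factorizations $q_1q_2=dm/m''$ then collapse after cancelling the arithmetic weights $c_1 m'' q_2/(d^2 m \tilde q_2)$ against the denominators produced by $\mathfrak{N}$ and the $r'$-count; the divisor-type losses are all absorbed into $M^\varepsilon$.

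Finally I would translate the ranges \eqref{range-all}, i.e. $\tilde N\asymp P^3M^3/N$ and $\tilde R\asymp M^{2+4\theta}P/(NL)$, into the bookkeeping. After substitution the powers of $N$, $P$, $L$, $\mathfrak{m}$, $D$, $Q$ should all cancel (this is the self-consistency check built into the choice of the normalizing constant $N^2C/(M^2P^4(L\tilde R)^2)$ and into $\mathfrak{N}$), leaving a clean power of $M$ times $(NP)^{1/2}$; keeping careful track of the $M^{4\theta}$ from $\tilde R$ and the $M^{-1/2}$, $M^{-3/2}$ denominators, one arrives at $\mathfrak{D}\ll (NP)^{1/2}M^{1/2+9\theta+\varepsilon}$. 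The main obstacle is purely arithmetic bookkeeping: one has to be scrupulous that the divisor sums over $m',d,c_1,m'',q_1,q_2$, together with the $\gcd(c_2-\tilde q_2\bar p,\;\cdot\;,r'\ell)$ factors coming out of Weil, really do cost only $M^\varepsilon$ rather than a genuine power of $M$, and that the constant in front of $\Omega$ is correctly normalized so that no hidden power of $M$ survives. There is no deep new input here beyond Weil's bound and the Rankin–Selberg estimate; the subtlety is entirely in confirming that the trivial treatment of the $p,\ell,c_2$ sum is not wasteful given the exact length $C_2$ dictated by the preceding Poisson step.
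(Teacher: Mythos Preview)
Your plan is correct and follows essentially the same route as the paper: apply the Weil bound to the Kloosterman sum to modulus $r'\ell$, sum trivially over $p$, $\ell$, $c_2$ (using $C_2\gg M^{-\varepsilon}$ so that $|\mathcal{S}|\asymp C_2$), and then control the sums over $r'$ and $n$ via the Ramanujan bound on average, finally inserting the ranges \eqref{range-all} for $\tilde N$ and $\tilde R$. The paper carries out the gcd bookkeeping by first summing over $r',\ell$ (absorbing the gcd from Weil), then unfolding the Fourier coefficients through the Hecke relation and gluing $hq_2n$ into a single variable to which the average Ramanujan bound $\sum_{u\ll U}|\lambda(1,u)|\ll U^{1+\varepsilon}$ applies directly; this is slightly cleaner than your Rankin--Selberg-plus-Cauchy suggestion for $\sum_n|\lambda(c_1m'',q_2n)|$, but leads to the same endpoint $\mathfrak{D}\ll M^\varepsilon N^2(\tilde RL)^2\tilde N^{1/2}/(M^{7/2}P^3)$, which on the range $N\gg M^{3/2-\theta}$ gives $(NP)^{1/2}M^{1/2+9\theta+\varepsilon}$.
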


\begin{proof}
Using the Weil bound we get
\begin{align*}
\Omega(\dots)\ll &\mathop{\sum\sum}_{\substack{r'\sim \tilde{R}/c_1\\n\ll \mathfrak{N}}}\:|\mathfrak{c}_{q_1}(n)|\:|\lambda(c_1m'',q_2n)|\:\sum_{\substack{P<p<2P\\p\;\text{prime}}}\;\sum_{\ell\in\mathcal{L}}\mathop{\sum}_{\substack{c_2\in \mathcal{S}}} \;(pc_2-\tilde{q}_2,\;m''n;r'\ell)(r'\ell)^{1/2}.
\end{align*}
(Note that $\tilde{R}<M$, and we are assuming that $M$ is a prime.) Summing over $r'$ and $\ell$ we get
\begin{align*}
\Omega(\dots)\ll &\frac{(\tilde{R}L)^{3/2}PC_2}{c_1^{3/2}}\mathop{\sum}_{\substack{n\ll \mathfrak{N}}}\:|\mathfrak{c}_{q_1}(n)|\:|\lambda(c_1m'',q_2n)|.
\end{align*}
Consequently we get
\begin{align*}
\mathfrak{D}\ll &\frac{N^2(\tilde{R}L)^{1/2}}{M^2P^3}\;\mathop{\sum\sum}_{\substack{m\sim\mathfrak{m}\\d\sim D}}\:\mathop{\sum\sum}_{c_1m''|dm}\:\mathop{\sum\sum}_{\substack{q_1q_2=dm/m''\\\tilde{q}_2\sim Q}}\:\frac{m''q_2}{c_1^{1/2}dm}\;\mathop{\sum}_{\substack{n\ll \mathfrak{N}}}\:|\mathfrak{c}_{q_1}(n)|\:|\lambda(c_1m'',q_2n)|.
\end{align*}
Using Hecke relation we get
\begin{align*}
\mathfrak{D}\ll &\frac{N^2(\tilde{R}L)^{1/2}}{M^2P^3}\;\mathop{\sum\sum\sum\sum}_{\substack{hq_2q_3m''\sim\mathfrak{m}D}}\:\mathop{\sum}_{c_1|hq_2q_3}\:\frac{m''^{1/2}}{q_3}\;\mathop{\sum}_{\substack{n\ll \mathfrak{N}/h}}\:|\lambda(1,hq_2n)|.
\end{align*}
Now taking dyadic subdivision for each variables, and then gluing $hq_2n$ we get
\begin{align*}
\mathfrak{D}\;\lhd\; &\frac{N^2(\tilde{R}L)^{1/2}}{M^2P^3}\;\mathop{\sum\sum}_{\substack{q_3m''\sim\mathfrak{m}D/HQ_2}}\;\frac{m''^{1/2}}{q_3}\;\mathop{\sum}_{\substack{u\ll U}}\:|\lambda(1,u)|.
\end{align*}
where
$$
U=\frac{(L\tilde{R})^{3/2}\tilde{N}^{1/2}M^\varepsilon}{m''^2M^{3/2}}.
$$
Using the Ramanujan bound on average we get
\begin{align*}
\mathfrak{D}\;\ll\; &M^\varepsilon\frac{N^2(\tilde{R}L)^2\tilde{N}^{1/2}}{M^{7/2}P^3}.
\end{align*}
The lemma follows.
\end{proof}

On the other hand if $C_2\leq 1$ the Weil bound yields  that the expression in \eqref{dual-od-red-tran-b} is dominated by
\begin{align*}
O\left( \frac{N^{3/2}P^{3/2}}{M}\right).
\end{align*}
So one still needs to save $P^{3/2}/M^{1/4}$. This can be achieved by using Cauchy and then applying the Poisson summation formula on the sum over $n$. \\

\begin{lem}
\label{lemma-bb}
If $C_2\ll M^{-\varepsilon}$, so that the set $\mathcal{S}$ is at most singleton, then we have
\begin{align*}
\mathfrak{D}\ll M^\varepsilon\:\frac{N^{9/4}(\tilde{R}L)^{1/2}}{M^{5/2}P^{9/4}}\left(L^{3/4}+\frac{(\tilde{R}L)^{1/2}P^{1/4}}{N^{1/4}}\right).
\end{align*}
\end{lem}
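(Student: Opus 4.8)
The plan is to handle the complementary range $C_2 \ll M^{-\varepsilon}$, where the set $\mathcal{S}$ contains at most one point, so the innermost sum over $c_2$ in $\Omega(\dots)$ has essentially a single term and the Weil bound alone is wasteful. Starting from \eqref{dual-od-red-tran-b}--\eqref{dual-od-red-tran-c}, I would first fix all the auxiliary divisor parameters $m,d,c_1,m'',q_1,q_2$ and reduce (after the usual dyadic subdivisions and the Hecke relation, exactly as in the proof of Lemma~\ref{lemma-aa}) to estimating, for each choice of these parameters, the sum
\begin{align*}
\mathop{\sum\sum}_{\substack{r'\sim \tilde{R}/c_1\\n\ll \mathfrak{N}}} |\mathfrak{c}_{q_1}(n)|\,|\lambda(c_1m'',q_2n)|\, \Bigl|\sum_{\substack{P<p<2P\\p\;\text{prime}}}\sum_{\ell\in\mathcal{L}} \chi(p\bar\ell)\, S(c_2(p,\ell)-\tilde{q}_2\bar p,\;\bar q_1\bar{\tilde q}_2 m'' M n; r'\ell)\Bigr|,
\end{align*}
where now $c_2 = c_2(p,\ell)$ is the unique integer (if it exists) near $d\tilde q_2/p$. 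The point is that the Kloosterman sum modulus $r'\ell \sim \tilde R L/c_1$ is being summed against only the $p$ and $\ell$ variables, and the oscillation in $n$ has been liberated by the reciprocity step of Lemma~\ref{observe}.

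Next I would apply the Cauchy--Schwarz inequality in the $(r',n)$ variables, using $\sum_{r'\sim \tilde R/c_1}\sum_{n\ll\mathfrak N}|\mathfrak{c}_{q_1}(n)|^2|\lambda(c_1m'',q_2n)|^2 \ll (\tilde R/c_1)\mathfrak{N}M^\varepsilon$ (via the Hecke relation and the Ramanujan-on-average bound used repeatedly in the excerpt) to pull out the arithmetic weights. That leaves the second-moment quantity
\begin{align*}
\mathop{\sum\sum}_{\substack{r'\sim \tilde R/c_1\\ n\ll \mathfrak N}} \Bigl|\sum_{p}\sum_{\ell} \chi(p\bar\ell)\, S(\cdots; r'\ell)\Bigr|^2,
\end{align*}
and after opening the square and smoothing the $n$-sum to a range of length $\sim\mathfrak N$, the plan is to apply Poisson summation to the sum over $n$ with modulus $r'\ell$ (for fixed $p,\ell,p',\ell'$ the relevant modulus from the two Kloosterman sums is $\mathrm{lcm}(r'\ell, r'\ell')$, which is $r'\ell\ell'$ generically since $\ell,\ell'$ are distinct primes). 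Because $\mathfrak N$ is comparable to or larger than this modulus, only the zero frequency survives, contributing a complete character sum that one evaluates by opening the Kloosterman sums; the diagonal $\ell=\ell'$, $p=p'$ contributes the term with $L^{3/4}$ and the off-diagonal contributes the term with $(\tilde R L)^{1/2}P^{1/4}/N^{1/4}$, matching the claimed shape. This is the heuristic ``save $P^3/M^{1/2}$'' step described in the outline, where the diagonal is fine because $PL > P^3/M^{1/2}$ and the off-diagonal saves $P^3/M^{1/4}(PL)^{1/2}$.

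The main obstacle I expect is the bookkeeping in the Poisson/complete-sum step: one must verify that the character sum arising from the zero frequency genuinely exhibits square-root cancellation (or better) in the off-diagonal $\ell\neq\ell'$ and no cancellation loss in the diagonal, and that the congruence conditions relating $c_2$ to $p$ (inherited from $c_2 \approx d\tilde q_2/p$) do not couple the $p$-sum to the $n$-sum in a way that obstructs Poisson. A secondary technical point is checking that the length $\mathfrak{N}$ of the $n$-sum is indeed at least the modulus $r'\ell\ell' \sim \tilde R L^2/c_1^2$ in the relevant parameter ranges, so that the non-zero frequencies are genuinely negligible; this should follow from $M^{2\theta+\varepsilon}\ll L < M^2/N$ together with the ranges \eqref{range-all} for $\tilde N,\tilde R$, but it must be confirmed. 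Once these are in hand, collecting the factors through \eqref{dual-od-red-tran-b} and summing over the divisor parameters $m,d,c_1,m'',q_1,q_2$ (all of which contribute only $M^\varepsilon$ after the dyadic reductions) yields the stated bound.
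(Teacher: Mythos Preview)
Your overall architecture is right — Cauchy in $(r',n)$, open the square, Poisson in $n$ modulo $r'\ell\ell'$ — and this is exactly what the paper does. But the ``secondary technical point'' you flag is in fact the main point, and it goes the wrong way. With the generic choices $c_1=m''=q_2=1$ one has $\mathfrak N\asymp (L\tilde R)^{3/2}\tilde N^{1/2}/M^{3/2}\asymp P^3$ while the Poisson modulus is $r'\ell\ell'\asymp \tilde R L^2/c_1\asymp M^{1/2}PL$ (note: $r'\sim\tilde R/c_1$, so the modulus is $\tilde R L^2/c_1$, not $/c_1^2$). Hence the number of surviving dual frequencies is $\asymp M^{1/2}L/P^2$, which for the eventual choices $P\approx M^{5/18}$, $L\approx M^{1/9}$ is $\asymp M^{1/18}\gg 1$. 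So you are not in a zero-frequency-only situation, and evaluating just the $n=0$ character sum does not suffice.

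What the paper actually does is keep all the dual frequencies and bound the resulting complete sum
\[
\mathfrak C_4=\sum_{a\bmod r'\ell\ell'}S(c_2-\tilde q_2\bar p,\beta a;r'\ell)\,\overline{S(c_2-\tilde q_2\bar p',\beta' a;r'\ell')}\,e\!\left(\frac{an}{r'\ell\ell'}\right)
\]
by square-root cancellation, invoking Deligne's bound (this is the same character sum treated in \cite{Mu4}). It is precisely this off-diagonal Deligne input, summed over all $|n|\ll hL^2\tilde R/(c_1\mathfrak N)$, that produces the $L^{3/4}$ term in the lemma; the second term $(\tilde RL)^{1/2}P^{1/4}/N^{1/4}$ comes from the zero-frequency/diagonal-type contribution. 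So your attribution of the two terms is reversed, and more importantly your proposal is missing the essential arithmetic ingredient: without the Deligne bound for $\mathfrak C_4$ at nonzero frequencies, the Poisson step does not close.
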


\begin{proof}
Suppose $\mathcal{S}$ is singleton with one element $c_2$.
Applying the Cauchy inequality we obtain
\begin{align*}
\Omega(\dots)\ll &\sum_{h|q_1}\;h\;\mathfrak{V}^{1/2}\:\mathfrak{U}^{1/2}
\end{align*}
where
\begin{align*}
\mathfrak{V}=&\mathop{\sum\sum}_{\substack{r'\sim \tilde{R}/c_1\\n\ll \mathfrak{N}/h}}\:|\lambda(c_1m'',hq_2n)|^2
\end{align*}
and
\begin{align*}
\mathfrak{U}=&\mathop{\sum\sum}_{\substack{r'\sim \tilde{R}/c_1\\n\ll \mathfrak{N}/h}}\:\Bigl|\sum_{\substack{P<p<2P\\p\;\text{prime}}}\;\sum_{\ell\in\mathcal{L}} \chi(p\bar{\ell})\;S(c_2-\tilde{q}_2\bar{p},\:\bar{q}_1\bar{\tilde{q}}_2 m''Mhn;r'\ell)\Bigr|^2.
\end{align*}
To $\mathfrak{U}$ we open the absolute square and apply the Poisson summation formula on the sum over $n$ with modulus $r'\ell\ell'$. This yields
\begin{align*}
\mathfrak{U}\ll &\frac{c_1\mathfrak{N}}{hL^2\tilde{R}}\mathop{\sum}_{\substack{r'\sim \tilde{R}/c_1}}\:\mathop{\sum\sum}_{\substack{P<p,p'<2P\\p,p'\;\text{prime}}}\;\mathop{\sum\sum}_{\ell,\ell'\in\mathcal{L}} \;\sum_{|n|\ll hL^2\tilde{R}/c_1\mathfrak{N}} \;|\mathfrak{C}_4| + M^{-2016} 
\end{align*}
where the character sum is given by
\begin{align*}
\mathfrak{C}_4=\sum_{a\bmod{r'\ell\ell'}}S(c_2-\tilde{q}_2\bar{p},\:\bar{q}_1\bar{\tilde{q}}_2 m''Mha;r'\ell)S(c_2-\tilde{q}_2\bar{p}',\:\bar{q}_1\bar{\tilde{q}}_2 m''Mha;r'\ell')e\left(\frac{an}{r'\ell\ell'}\right).
\end{align*}
This character sum has also appeared in \cite{Mu4}, where we proved square root cancellation in the sum using Deligne's result. It follows that
\begin{align*}
\mathfrak{U}\ll &\frac{\tilde{R}^{5/2}P^2L^4}{c_1^{5/2}} +\frac{\mathfrak{N}\tilde{R}^2L^2}{hc_1^2}\left(P+\frac{P^2c_1}{\tilde{R}}\right)
\ll M^\varepsilon\:\frac{(\tilde{R}L)^{5/2}P^2}{c_1^{5/2}}\left(L^{3/2}+\frac{\tilde{R}LP^{1/2}}{N^{1/2}}\right)
\end{align*}
where in the last inequality we assumed that $L\ll M^2/N$.
Consequently we get the bound
\begin{align*}
\Omega(\dots)\ll &M^\varepsilon\:\frac{(\tilde{R}L)^{5/4}P}{c_1^{5/4}}\left(L^{3/4}+\frac{(\tilde{R}L)^{1/2}P^{1/4}}{N^{1/4}}\right)\sum_{h|q_1}\;h\;\mathfrak{V}^{1/2}
\end{align*}
and hence
\begin{align*}
\mathfrak{D}\ll &\frac{N^2CM^\varepsilon}{M^2P^3(L\tilde{R})^{3/4}}\;\left(L^{3/4}+\frac{(\tilde{R}L)^{1/2}P^{1/4}}{N^{1/4}}\right)\\
&\times \mathop{\sum\sum}_{\substack{m\sim\mathfrak{m}\\d\sim D}}\:\mathop{\sum\sum}_{c_1m''|dm}\:\mathop{\sum\sum}_{\substack{q_1q_2=dm/m''\\\tilde{q}_2\sim Q}}\:\frac{m''q_2}{c_1^{1/4}d^2m\tilde{q}_2}\;\sum_{h|q_1}\;h\;\mathfrak{V}^{1/2}.
\end{align*}
Consider the second line of the above expression. We glue $dm$ into a single variable $u$ of size $D\mathfrak{m}$, and write $hh'=q_1$, so that $hh'q_2m''=dm=u$. It follows that the second line is dominated by
\begin{align*}
\frac{1}{D}\mathop{\sum}_{\substack{u\sim D\mathfrak{m}}}\:\mathop{\sum\sum}_{c_1m''|u}\:\mathop{\sum\sum\sum}_{\substack{hh'q_2=u/m''}}\:\frac{1}{c_1^{1/4}h'}\;\mathfrak{V}^{1/2}.
\end{align*}
Applying Cauchy we get that this is bounded by
\begin{align*}
\frac{\mathfrak{m}^{1/2}}{D^{1/2}}\left[\mathop{\sum}_{\substack{u\sim D\mathfrak{m}}}\:\mathop{\sum\sum}_{c_1m''|u}\:\mathop{\sum\sum}_{\substack{hq_2|u/m''}}\:\mathfrak{V}\right]^{1/2},
\end{align*}
which is dominated by $O(M^\varepsilon\mathfrak{m}(PL\tilde{R})^{3/4}N^{-1/4})$, by applying the Ramanujan bound on average. The lemma follows.
\end{proof}

From Lemmas~\ref{lemma-aa} and \ref{lemma-bb} we get that
\begin{align*}
\mathfrak{D}\ll M^\varepsilon N^{1/2}\left[P^{1/2}M^{1/2+9\theta}+ \frac{P^{1/4}M^{5/8+17\theta/4}}{L^{1/2}}\left(L^{3/4}+\frac{M^{11\theta/4}P^{3/4}}{M^{1/8}}\right)\right].
\end{align*}
From this we conclude Proposition~\ref{prop2}.\\




\begin{thebibliography}{99}











\bibitem{G} D. Goldfeld: Automorphic Forms and L-Functions for the Group $GL(n,\mathbb R)$. Cambridge Univ. Press, (2006), vol. \textbf{99}, Cambridge. 



\bibitem{L} X. Li, 
Bounds for $GL(3)\times GL(2)$ $L$-functions and $GL(3)$ $L$-functions. Annals of Math.  \textbf{173} (2011), 301--336.


\bibitem{MS} S. D. Miller and W. Schmid: Automorphic distributions, $L$-functions, and Voronoi summation for $GL(3)$. Annals of Math. \textbf{164} (2006), 423--488.






\bibitem{Mu} R. Munshi: The circle method and bounds for $L$-functions - I. Math. Annalen, \textbf{358} (2014), 389--401.

\bibitem{Mu4} R. Munshi: The circle method and bounds for $L$-functions - II. Subconvexity for twists of $GL(3)$ $L$-functions. American J. Math., \textbf{137} (2015), 791--812.

  
  \bibitem{Mu5} R. Munshi: The circle method and bounds for $L$-functions - IV. Subconvexity for twists of $GL(3)$ $L$-functions. Annals of Math., \textbf{182} (2015), 617--672.
  



\end{thebibliography}
\end{document}